\documentclass[12pt,reqno]{amsart}
\usepackage{latexsym,amssymb,amscd}

%
% Standard rings and fields, affine and projective space
%
               % the font for N,Z,Q,R,C

%
%------------------------------------------------
% Symbols in "Fraktur"
%
               % font for "Fraktur"

%------------------------------------------------
% Symbols in "mathcal
%

\def\B'c{{\mathcal{B'}}}
\def\U'c{{\mathcal{U'}}}

%------------------------------------------------
% Small letters in bold
%

%
\def\opn#1#2{\def#1{\operatorname{#2}}} % to make operators
%------------------------------------------------
% Numerical invariants of rings, ideals, and modules
%
\opn\chara{char}
\opn\length{\ell}
%\opn\pd{pd}
%\opn\rk{rk}
\opn\projdim{proj\,dim}
\opn\injdim{inj\,dim}
\opn\ini{in}
\opn\rank{rank}
\opn\depth{depth}
\opn\sdepth{sdepth}
\opn\height{ht}
\opn\embdim{emb\,dim}
\opn\codim{codim}

\opn\Tr{Tr}
\opn\bigrank{big\,rank}
\opn\superheight{superheight}\opn\lcm{lcm}
\opn\trdeg{tr\,deg}%
\opn\reg{reg}
\opn\lreg{lreg}
\opn\set{set}
\opn\supp{Supp}
\opn\shad{Shad}
%------------------------------------------------
% Divisors
%
\opn\div{div}
\opn\Div{Div}
\opn\cl{cl}
\opn\Cl{Cl}
%
%------------------------------------------------
% Subsets of the spectrum of a ring
%
\opn\Spec{Spec}
\opn\Supp{Supp}
\opn\supp{supp}
\opn\Sing{Sing}
\opn\Ass{Ass}
%
%------------------------------------------------
% Standard operations on ideals and modules
%
\opn\Ann{Ann}
\opn\Rad{Rad}
\opn\Soc{Soc}
%
%------------------------------------------------
% Linear algebra and homology, endo- and automorphisms
%
\opn\Ker{Ker}
\opn\Coker{Coker}
\opn\Im{Im}
\opn\Hom{Hom}
\opn\Tor{Tor}
\opn\Ext{Ext}
\opn\End{End}
\opn\Aut{Aut}
\opn\id{id}

\opn\nat{nat}
\opn\GL{GL}
\opn\SL{SL}
\opn\mod{mod}
\opn\ord{ord}
%
%------------------------------------------------
% Convexity
%
\opn\aff{aff}
\opn\con{conv}
\opn\relint{relint}
\opn\st{st}
\opn\lk{lk}
\opn\cn{cn}
\opn\core{core}
\opn\vol{vol}
%------------------------------------------------
% Graded rings and Rees algebras
\opn\gr{gr}

%
%------------------------------------------------
% Polynomials and power series
%

\def\pot#1#2{#1[\kern-0.28ex[#2]\kern-0.28ex]}

%
%------------------------------------------------
% Direct and inverse limits
%
\opn\dirlim{\underrightarrow{\lim}}
\opn\invlim{\underleftarrow{\lim}}
%
%
% Names with a meaning
%

\def\pnt{{\raise0.5mm\hbox{\large\bf.}}}

%
%------------------------------------------------
%
\let\to=\rightarrow

\def\Implies{\ifmmode\Longrightarrow \else
     \unskip${}\Longrightarrow{}$\ignorespaces\fi}
\def\implies{\ifmmode\Rightarrow \else
     \unskip${}\Rightarrow{}$\ignorespaces\fi}
\def\iff{\ifmmode\Longleftrightarrow \else
     \unskip${}\Longleftrightarrow{}$\ignorespaces\fi}

\let\:=\colon
\newtheorem{Theorem}{Theorem}[section]
\newtheorem{Lemma}[Theorem]{Lemma}
\newtheorem{Corollary}[Theorem]{Corollary}
\newtheorem{Proposition}[Theorem]{Proposition}
\theoremstyle{remark}
\newtheorem{Remark}[Theorem]{Remark}
\newtheorem{Method 1}[Theorem]{Method 1}
\newtheorem{first proof}[Theorem]{first proof}

\theoremstyle{definition}
\newtheorem{Definition}[Theorem]{Definition}
\newtheorem{Problem}[Theorem]{Problem}

%
% We like the var forms of some greek letters (as taught in German schools)
%
\let\epsilon=\varepsilon
\let\phi=\varphi
\let\kappa=\varkappa
%
%           We print on A4 paper
%
\textwidth=15cm
\textheight=22cm
\topmargin=0.5cm
\oddsidemargin=0.5cm
\evensidemargin=0.5cm
\pagestyle{plain}
\footskip=40 pt
% ------    END OF allgemein MACROS    -------
%
% ------    MACROS FOR THIS ARTICLE  -------
%

\numberwithin{equation}{section}

\begin{document}
\title[Matlis Duality]{A Few Comments On Matlis Duality}%
\author{Waqas Mahmood}%
\address{Abdus Salam School of Mathematical Sciences, GCU, Lahore Pakistan}%
\email{waqassms$@$gmail.com}%

\thanks{This research was partially supported by Higher Education Commission, Pakistan}
\subjclass[2000]{13D45.}
\keywords{Matlis Duality, Injective Hull, Local Cohomology, Flat Covers}%

\begin{abstract}
For a Noetherian local ring $(R,{\mathfrak m})$ with $\mathfrak p\in \Spec(R)$ we denote $E_R(R/\mathfrak p)$ by the $R$-injective hull of $R/\mathfrak p$. We will show that it has an $\hat{R}^\mathfrak p$-module structure and there is an isomorphism $E_R(R/\mathfrak p)\cong E_{\hat{R}^\mathfrak p}(\hat{R}^\mathfrak p/\mathfrak p\hat{R}^\mathfrak p)$ where $\hat{R}^\mathfrak p$ stands for the $\mathfrak p$-adic completion of $R$. Moreover for a complete Cohen-Macaulay ring $R$ the module $D(E_R(R/\mathfrak p))$ is isomorphic to $\hat{R}_\mathfrak{p}$ provided that $\dim(R/\mathfrak p)=1$ and $D(\cdot)$ denotes the Matlis dual functor $\Hom_R(\cdot, E_R(R/\mathfrak m))$. Here $\hat{R}_\mathfrak{p}$ denotes the completion of ${R_\mathfrak p}$  with respect to the maximal ideal $\mathfrak pR_\mathfrak p$. These results extend those of Matlis (see \cite{m}) shown in the case of the maximal ideal ${\mathfrak m}$.
\end{abstract}
\maketitle
\section{Introduction}
Throughout this paper $R$ is a Noetherian local ring with the maximal ideal $\mathfrak m$ and the residue field $k=R/\mathfrak{m}$. We denote the (contravaraint) Hom-functor $\Hom_R(\cdot, E_R(R/\mathfrak p))$ by $\vee$, that is $M^\vee: =\Hom_R(M, E_R(R/\mathfrak p))$ for an $R$-module $M$ and $E_R(R/\mathfrak p)$ is a fixed $R$-injective hull of $R/\mathfrak p$ where $\mathfrak p\in \Spec(R)$. Moreover we denote the Matlis dual functor by $D(\cdot):= \Hom_R(\cdot, E_R(k))$. Also $\hat{R}_\mathfrak{p}$ (resp. ${\hat{R}}^\mathfrak p$) stands for the completion of ${R_\mathfrak p}$ (resp. of $R$) with respect to the maximal ideal $\mathfrak pR_\mathfrak p$ (resp. with respect to the prime ideal $\mathfrak{p}$).

Our main goal is to give the structure of $E_R(R/\mathfrak p)$ as an ${\hat{R}}^\mathfrak p$-module. In particular we will show the following result:

\begin{Theorem}
Let $(R,\mathfrak m)$ be a ring and $\mathfrak p\in \Spec(R)$. Then $E_R(R/\mathfrak p)$ admits the structure of an ${\hat{R}}^\mathfrak p$-module and $E_R(R/\mathfrak p)\cong E_{{\hat{R}}^\mathfrak p}(\hat{R}^\mathfrak p/\mathfrak p\hat{R}^\mathfrak p)$ as ${\hat{R}}^\mathfrak p$-modules.
\end{Theorem}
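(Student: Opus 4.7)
The plan is to exploit the fact that $E := E_R(R/\mathfrak p)$ carries a natural $R_{\mathfrak p}$-module structure under which it is isomorphic to $E_{R_{\mathfrak p}}(R_\mathfrak p/\mathfrak pR_\mathfrak p)$. Since $R_\mathfrak p$ is local with maximal ideal $\mathfrak pR_\mathfrak p$, classical injective-hull theory forces every element of $E$ to be annihilated by a power of $\mathfrak pR_\mathfrak p$, and hence by a power of $\mathfrak p$; in other words $E = \bigcup_n (0:_E \mathfrak p^n)$.

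Using this $\mathfrak p$-torsion I would define the $\hat R^\mathfrak p$-action on $E$ as follows: for $\hat r \in \hat R^\mathfrak p = \invlim_n R/\mathfrak p^n$ and $x \in E$ with $\mathfrak p^n x = 0$, put $\hat r \cdot x := r \cdot x$, where $r \in R$ is any preimage of the image of $\hat r$ in $R/\mathfrak p^n$. Well-definedness is immediate from $\mathfrak p^n x = 0$, and the module axioms follow. A key by-product is that every $R$-submodule $N \subseteq E$ is automatically an $\hat R^\mathfrak p$-submodule, so the two submodule lattices of $E$ coincide; via the canonical identification $R/\mathfrak p = \hat R^\mathfrak p/\mathfrak p\hat R^\mathfrak p$ (a standard property of $\mathfrak p$-adic completion in the Noetherian setting), the inclusion $R/\mathfrak p \hookrightarrow E$ thus remains an essential extension of $\hat R^\mathfrak p$-modules.

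Set $\hat E := E_{\hat R^\mathfrak p}(\hat R^\mathfrak p/\mathfrak p\hat R^\mathfrak p)$. The same argument applied to the Noetherian ring $\hat R^\mathfrak p$ and its prime $\mathfrak p\hat R^\mathfrak p$ (whose localization is local) shows $\hat E = \bigcup_n (0:_{\hat E}\mathfrak p^n)$. Injectivity of $\hat E$ together with the essentiality established above produces an $\hat R^\mathfrak p$-linear embedding $\varphi\colon E \hookrightarrow \hat E$ extending the identity on $R/\mathfrak p$. To see $\varphi$ is surjective I would argue level-by-level: for each $n$, both $(0:_E\mathfrak p^n) = \Hom_R(R/\mathfrak p^n, E)$ and $(0:_{\hat E}\mathfrak p^n) = \Hom_{\hat R^\mathfrak p}(\hat R^\mathfrak p/\mathfrak p^n\hat R^\mathfrak p, \hat E)$ are injective hulls of $R/\mathfrak p$ over the common ring $R/\mathfrak p^n = \hat R^\mathfrak p/\mathfrak p^n\hat R^\mathfrak p$ (standard change-of-rings: $\Hom_S(S/I,-)$ takes $S$-injectives to $S/I$-injectives, and essentiality is inherited from $E$, resp.\ $\hat E$). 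Any essential embedding between two injective hulls of the same module is automatically an isomorphism, so $\varphi$ is surjective on each $\mathfrak p^n$-torsion layer; passing to the union yields $E \iso \hat E$ as $\hat R^\mathfrak p$-modules.

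The principal difficulty is that $\hat R^\mathfrak p$ is not local when $\mathfrak p \neq \mathfrak m$, so neither the complete-local Matlis structure theorem nor the standard ``injective hull of the residue field is maximal-ideal-torsion'' assertion is directly available over $\hat R^\mathfrak p$. The strategy above circumvents this by descending to $\mathfrak p^n$-torsion layers, where $R$ and $\hat R^\mathfrak p$ share the common quotient $R/\mathfrak p^n$ and the classical Noetherian machinery applies.
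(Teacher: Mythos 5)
Your argument is correct, and its core coincides with the paper's own proof: the observation that $E_R(R/\mathfrak p)$ is $\mathfrak p$-torsion, the resulting definition $\hat r\cdot x:=rx$ of the $\hat R^{\mathfrak p}$-action, the remark that $R$-submodules and $\hat R^{\mathfrak p}$-submodules of $E_R(R/\mathfrak p)$ coincide so that essentiality over $R/\mathfrak p\cong\hat R^{\mathfrak p}/\mathfrak p\hat R^{\mathfrak p}$ is preserved, and the ensuing $\hat R^{\mathfrak p}$-linear embedding $\varphi\colon E_R(R/\mathfrak p)\hookrightarrow E_{\hat R^{\mathfrak p}}(\hat R^{\mathfrak p}/\mathfrak p\hat R^{\mathfrak p})$ are exactly the paper's steps (the paper gets the torsion statement from $\Ass_R(Rx)=\{\mathfrak p\}$ for cyclic submodules rather than from $E_R(R/\mathfrak p)\cong E_{R_{\mathfrak p}}(R_{\mathfrak p}/\mathfrak pR_{\mathfrak p})$, an immaterial difference). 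You genuinely diverge only at the surjectivity of $\varphi$: the paper applies the approximation trick once more inside $E_{\hat R^{\mathfrak p}}(\hat R^{\mathfrak p}/\mathfrak p\hat R^{\mathfrak p})$ to show that this module is an essential extension of $R/\mathfrak p$ already as an $R$-module, and then concludes because the injective $R$-module $E_R(R/\mathfrak p)$ admits no proper essential extension; you instead restrict $\varphi$ to the $\mathfrak p^{n}$-torsion layers, note that $(0:_{E}\mathfrak p^{n})=\Hom_R(R/\mathfrak p^{n},E)$ and its counterpart in the target are both injective hulls of $R/\mathfrak p$ over the common ring $R/\mathfrak p^{n}\cong\hat R^{\mathfrak p}/\mathfrak p^{n}\hat R^{\mathfrak p}$, and invoke uniqueness of injective hulls before passing to the union. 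Both endings are sound: the paper's is marginally shorter and avoids any change-of-rings input, while yours makes explicit where the Noetherian hypotheses enter and why the non-locality of $\hat R^{\mathfrak p}$ is harmless, at the price of citing that $\Hom_R(R/I,-)$ carries injectives to injective $R/I$-modules and that a monomorphism between injective hulls fixing the common submodule is an isomorphism.
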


In the case of the maximal ideal $\mathfrak m$, Matlis has shown (see \cite{m}) that the injective hull $E_R(k)$ of the residue field $k$ has the structure of ${\hat{R}}$-module and it is isomorphic to the ${\hat{R}}$-module $E_{\hat{R}}(k)$. Here we will extend this result to an arbitrary prime ideal $\mathfrak p$, that is $E_R(R/\mathfrak p)$ admits the structure of an ${\hat{R}}^\mathfrak p$-module and it is isomorphic to the ${\hat{R}}^\mathfrak p$-module $E_{{\hat{R}}^\mathfrak p}(\hat{R}^\mathfrak p/\mathfrak p\hat{R}^\mathfrak p)$.

Moreover, we know that $\Hom_R(E_R(R/\mathfrak p), E_R(R/\mathfrak q))= 0$ for any $\mathfrak p,\mathfrak q \in \Spec(R)$ such that $\mathfrak p\in V(\mathfrak q)$ and $\mathfrak p\neq \mathfrak q$. It was proven by Enochs (see \cite[p. 183]{e1}) that the module $E_R(R/\mathfrak q)^\vee$ has the following decomposition:
\[
E_R(R/\mathfrak q)^\vee\cong\prod T_{\mathfrak p'}.
\]
where $\mathfrak p'\in \Spec(R)$ and $T_{\mathfrak p'}$ denotes the completion of a free $R_{\mathfrak p'}$-module with respect to $\mathfrak p'R_{\mathfrak p'}$-adic completion. Here we are succeeded to prove that:
\begin{Theorem}
Let $(R,\mathfrak m)$ be a complete Cohen-Macaulay ring of dimension $n$. Suppose that $\mathfrak p\in \Spec(R)$ with $\dim(R/\mathfrak p)=1$. Then there is an isomorphism
\[
D(E_R(R/\mathfrak p))\cong  \hat{R}_\mathfrak{p}.
\]
\end{Theorem}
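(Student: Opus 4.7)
The plan is to reduce the computation of $D(E_R(R/\mathfrak p))$ to an inverse limit over the nilpotent thickenings $A_n := R/\mathfrak p^n$, each a complete local ring of dimension $\dim(R/\mathfrak p) = 1$ with exactly two prime ideals, and then to apply Enochs' structure theorem (cited just before the theorem) for flat cotorsion modules in its simplest non-trivial case.

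First, since $E_R(R/\mathfrak p) \cong E_{R_\mathfrak p}(k(\mathfrak p))$ is $\mathfrak p R_\mathfrak p$-torsion, every element is annihilated by a power of $\mathfrak p$, yielding $E_R(R/\mathfrak p) = \dirlim_n M_n$ with $M_n := (0:_{E_R(R/\mathfrak p)}\mathfrak p^n) \cong E_{A_n}(A_n/\mathfrak p A_n)$. Applying $\Hom_R(-, E_R(k))$ converts the direct limit into an inverse limit, and combining with the identification $(0:_{E_R(k)}\mathfrak p^n) \cong E_{A_n}(k)$ gives
\[
D(E_R(R/\mathfrak p)) \;\cong\; \invlim_n F_n, \qquad F_n := \Hom_{A_n}\!\bigl(E_{A_n}(A_n/\mathfrak p A_n),\, E_{A_n}(k)\bigr).
\]
It then suffices to identify $F_n$ with $R_\mathfrak p/\mathfrak p^n R_\mathfrak p$ compatibly in $n$, since $\invlim_n R_\mathfrak p/\mathfrak p^n R_\mathfrak p = \hat R_\mathfrak p$.

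At level $n$ the hypothesis $\dim(R/\mathfrak p)=1$ forces $\Spec A_n = \{\mathfrak p A_n, \mathfrak m A_n\}$, so Enochs' decomposition writes $F_n = T_{\mathfrak p A_n} \times T_{\mathfrak m A_n}$ with each $T_{\mathfrak p'}$ the $\mathfrak p'$-adic completion of a free $(A_n)_{\mathfrak p'}$-module. Pick $y \in \mathfrak m \setminus \mathfrak p$; as a unit in $(A_n)_\mathfrak p$, $y$ acts as an automorphism on $E_{A_n}(A_n/\mathfrak p A_n) = E_{(A_n)_\mathfrak p}(k(\mathfrak p))$, and hence by precomposition on $F_n$. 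Therefore $\mathfrak m F_n = F_n$ and $F_n \otimes_{A_n} k = 0$; the same $y$-invertibility gives $T_{\mathfrak p A_n} \otimes_{A_n} k = 0$, forcing $T_{\mathfrak m A_n} \otimes_{A_n} k = 0$, which by Nakayama for the $\mathfrak m$-adic completion $T_{\mathfrak m A_n}$ yields $T_{\mathfrak m A_n} = 0$. Thus $F_n = T_{\mathfrak p A_n}$ is a free module over the Artinian (hence already complete) local ring $(A_n)_\mathfrak p = R_\mathfrak p/\mathfrak p^n R_\mathfrak p$. To pin down the rank as $1$, invoke Theorem~1.1 applied to the local ring $R_\mathfrak p$: this equips $E_{A_n}(A_n/\mathfrak p A_n)$ with its canonical $(A_n)_\mathfrak p$-module structure and identifies $F_n$ with the endomorphism module $\End_{(A_n)_\mathfrak p}(E_{(A_n)_\mathfrak p}(k(\mathfrak p)))$, which by Matlis's original theorem for the complete Artinian local ring $(A_n)_\mathfrak p$ equals $(A_n)_\mathfrak p$ itself. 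The transition maps in the inverse system coincide with the canonical surjections $R_\mathfrak p/\mathfrak p^{n+1}R_\mathfrak p \twoheadrightarrow R_\mathfrak p/\mathfrak p^n R_\mathfrak p$, so passing to the limit yields $D(E_R(R/\mathfrak p)) \cong \hat R_\mathfrak p$.

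The main obstacle is the rank-one identification, specifically the step realizing $F_n$ as $\End_{(A_n)_\mathfrak p}(E_{(A_n)_\mathfrak p}(k(\mathfrak p)))$: this requires producing a canonical $A_n$-linear map $E_{A_n}(A_n/\mathfrak p A_n) \to E_{A_n}(k)$ whose $(A_n)_\mathfrak p$-orbit under precomposition fills out all of $F_n$, rather than a proper submodule. A secondary technical point is checking that the transition maps between the $F_n$ really agree with the canonical quotients of $R_\mathfrak p/\mathfrak p^n R_\mathfrak p$, not some twisted variants, so that the inverse limit computes $\hat R_\mathfrak p$ with its standard structure. The Cohen--Macaulay hypothesis enters through $\height\mathfrak p = n-1$ and through the applicability of Theorem~1.1 to $R_\mathfrak p$, which supplies the $(A_n)_\mathfrak p$-module structure that makes $\hat R_\mathfrak p$ the expected answer on the dual side.
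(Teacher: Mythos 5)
Your reduction to the thickenings $A_n=R/\mathfrak p^n$ is sound as far as it goes: $E_R(R/\mathfrak p)=\dirlim (0:_{E_R(R/\mathfrak p)}\mathfrak p^n)$, the adjunction giving $D(E_R(R/\mathfrak p))\cong \invlim F_n$ with $F_n=\Hom_{A_n}(E_{A_n}(A_n/\mathfrak pA_n),E_{A_n}(k))$ is correct, $\Spec A_n$ does consist of the two primes $\mathfrak pA_n,\mathfrak mA_n$, and the Enochs decomposition together with the invertible action of $y\in\mathfrak m\setminus\mathfrak p$ does kill $T_{\mathfrak mA_n}$, leaving $F_n$ free over the Artinian ring $(A_n)_{\mathfrak p}=R_\mathfrak p/\mathfrak p^nR_\mathfrak p$. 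But the step you yourself flag as ``the main obstacle'' is a genuine gap, and it is not a technicality: the identification of $F_n$ with $\End_{(A_n)_\mathfrak p}(E_{(A_n)_\mathfrak p}((A_n)_\mathfrak p/\mathfrak p(A_n)_\mathfrak p))$, i.e.\ the rank-one statement, does not follow from Theorem~1.1. That theorem only equips an injective hull with a module structure over a completion; it contains no duality between $E_{A_n}(A_n/\mathfrak pA_n)$ and $E_{A_n}(k)$, and there is no canonical reason a single $A_n$-linear map $E_{A_n}(A_n/\mathfrak pA_n)\to E_{A_n}(k)$ should generate $F_n$ over $(A_n)_\mathfrak p$. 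In fact this is exactly where the completeness of $R/\mathfrak p$ must enter: by Schenzel's result quoted in the paper, $D(E_R(R/\mathfrak p))\cong\hat R_\mathfrak p$ fails whenever $R/\mathfrak p$ is not complete, and in that case the free module $T_{\mathfrak pA_n}$ has large rank, so the rank cannot be pinned down by any formal argument of the kind you sketch. Already for $R=k[[x,y]]$, $\mathfrak p=(x)$, $n=1$, the claim $F_1=\Hom_{k[[y]]}(k((y)),k((y))/k[[y]])\cong k((y))$ requires the vanishing of $\Ext^1_{k[[y]]}(k((y)),k[[y]])$, which is a completeness statement about $k[[y]]=R/\mathfrak p$, not an application of Theorem~1.1. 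So the heart of the theorem is precisely the unproven step; what remains of your argument is a correct framing, not a proof. (Also note that your route, if completed, would use the Cohen--Macaulay hypothesis nowhere -- Theorem~1.1 has no hypotheses -- which is another sign that the real content has been deferred to the missing step.)

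For comparison, the paper closes exactly this hole by a different mechanism: since $R$ is complete Cohen--Macaulay, it takes the canonical module $K_R$, uses the Fossum--Foxby--Griffith--Reiten description of the minimal injective resolution to get the exact sequence $0\to H^{n-1}_\mathfrak q(K_R)\to E_R(R/\mathfrak q)\to E_R(k)\to H^{n}_\mathfrak q(K_R)\to 0$, dualizes it, and compares the result (via a natural map $\hat R_\mathfrak q\to D(E_R(R/\mathfrak q))$ and the five lemma) with the exact sequence $0\to R/u(\mathfrak q)\to\hat R_\mathfrak q\to D(H^{n-1}_\mathfrak q(K_R))\to 0$ coming from Corollary~2.7, whose proof is where the global transform and the inverse-limit computation $\invlim T(R/\mathfrak q^s)\cong\hat R_\mathfrak q$ do the work that your rank-one step would have to do. If you want to salvage your approach, you need an independent proof that the natural map $R_\mathfrak p/\mathfrak p^nR_\mathfrak p\to F_n$ is surjective (equivalently an $\Ext^1$-vanishing over the complete ring $R/\mathfrak p$, or the computation of $\invlim T(A_n/\mathfrak p^sA_n)$ as in Corollary~2.7), and you should also verify the compatibility of the transition maps, which you currently only assert.
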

Recently Schenzel has shown (see \cite[Theorem 1.1]{p1}) that if $\mathfrak p$ is a one dimensional prime ideal then $D(E_R(R/\mathfrak p))\cong  \hat{R}_\mathfrak{p}$ if and only if $R/\mathfrak p$ is complete. 

Furthermore Enochs (see \cite[p. 183]{e1}) has shown that the module $E_R(R/\mathfrak q)^\vee$ is a flat cover of some cotorsion module. In the end of the section 2 we will show the following result:

\begin{Theorem}\label{12}
Let $(R,\mathfrak m)$ be a ring and $\mathfrak p,\mathfrak q \in \Spec(R)$ such that $\mathfrak p\in V(\mathfrak q)$. Then $E_R(R/\mathfrak q)^\vee$ is a flat precover of $(R/\mathfrak q)^\vee.$
\end{Theorem}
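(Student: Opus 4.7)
The plan is to identify a natural candidate map and then verify the two properties of a flat precover: flatness of the source and the universal lifting property. I would apply the contravariant functor $\vee = \Hom_R(-, E_R(R/\mathfrak{p}))$ to the essential injection $\iota \colon R/\mathfrak{q} \hookrightarrow E_R(R/\mathfrak{q})$ to obtain the induced morphism $\iota^\vee \colon E_R(R/\mathfrak{q})^\vee \to (R/\mathfrak{q})^\vee$. Setting $C = E_R(R/\mathfrak{q})/(R/\mathfrak{q})$ and using the injectivity of $E_R(R/\mathfrak{p})$, the short exact sequence $0 \to R/\mathfrak{q} \to E_R(R/\mathfrak{q}) \to C \to 0$ dualizes to
\[
0 \longrightarrow C^\vee \longrightarrow E_R(R/\mathfrak{q})^\vee \stackrel{\iota^\vee}{\longrightarrow} (R/\mathfrak{q})^\vee \longrightarrow 0.
\]
This furnishes the candidate precover; what remains is to show that the source is flat and that morphisms from flat modules into the target lift along $\iota^\vee$.

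For flatness of $E_R(R/\mathfrak{q})^\vee$ I would invoke the decomposition of Enochs already quoted in the introduction, namely $E_R(R/\mathfrak{q})^\vee \cong \prod_{\mathfrak{p}'} T_{\mathfrak{p}'}$ where each $T_{\mathfrak{p}'}$ is the $\mathfrak{p}'R_{\mathfrak{p}'}$-adic completion of a free $R_{\mathfrak{p}'}$-module. Since $R_{\mathfrak{p}'}$ is Noetherian, such a completion is flat over $R_{\mathfrak{p}'}$ and, via the flat extension $R \to R_{\mathfrak{p}'}$, also flat over $R$. Because $R$ is itself Noetherian, arbitrary direct products of flat $R$-modules remain flat by Chase's theorem, which gives flatness of $E_R(R/\mathfrak{q})^\vee$ as an $R$-module.

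For the lifting property, let $F'$ be any flat $R$-module and $f \colon F' \to (R/\mathfrak{q})^\vee$ a morphism. Applying $\Hom_R(F', -)$ to the short exact sequence above, the obstruction to factoring $f$ through $\iota^\vee$ lies in $\Ext^1_R(F', C^\vee)$. I would use the standard adjunction
\[
\Ext^1_R\bigl(F', \Hom_R(C, E_R(R/\mathfrak{p}))\bigr) \cong \Hom_R\bigl(\Tor_1^R(F', C), E_R(R/\mathfrak{p})\bigr),
\]
which holds because $E_R(R/\mathfrak{p})$ is injective (apply $\Hom_R(-, E_R(R/\mathfrak{p}))$ to a projective resolution of $F'$ tensored with $C$ and take cohomology). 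Since $F'$ is flat we have $\Tor_1^R(F', C) = 0$, hence $\Ext^1_R(F', C^\vee) = 0$, and any such $f$ lifts through $\iota^\vee$. Thus $\iota^\vee$ is a flat precover of $(R/\mathfrak{q})^\vee$.

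The main obstacle is really the flatness step: absent an appeal to Enochs's structural decomposition, one would have to establish flatness of $E_R(R/\mathfrak{q})^\vee$ directly, for instance via the character-module criterion or through the pure-injective behavior of Hom-modules into an injective. Granted that, the precover property itself reduces cleanly to the cotorsion-type vanishing $\Ext^1_R(F', C^\vee) = 0$, which is a formal consequence of the Ext--Tor adjunction and the flatness of $F'$.
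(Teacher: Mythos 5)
Your argument is correct, and at bottom it runs on the same two facts as the paper's proof: flatness of the test module and injectivity of $E_R(R/\mathfrak q)$ together with exactness of $\Hom_R(-,E_R(R/\mathfrak p))$. The paper does the lifting step in one stroke: tensor the inclusion $R/\mathfrak q\hookrightarrow E_R(R/\mathfrak q)$ with a flat $F$ (the map stays injective), apply the exact functor $\Hom_R(-,E_R(R/\mathfrak p))$, and use Hom--tensor adjunction to read off the surjectivity of $\Hom_R(F,E_R(R/\mathfrak q)^\vee)\to\Hom_R(F,(R/\mathfrak q)^\vee)$. You package the same computation homologically, as the vanishing of the obstruction $\Ext^1_R(F',C^\vee)\cong\Hom_R(\Tor_1^R(F',C),E_R(R/\mathfrak p))=0$; this is just the derived form of the same adjunction, so the two arguments are interchangeable. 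Where you genuinely add something is the flatness of $E_R(R/\mathfrak q)^\vee$ itself: the paper's proof verifies only the lifting condition and leaves flatness implicit (resting on the Enochs decomposition quoted before the theorem), whereas you justify it via the product $\prod T_{\mathfrak p'}$, flatness of each completed free module, and Chase's theorem over the Noetherian (hence coherent) ring $R$. Note also that the "main obstacle" you flag can be removed without Enochs's structure theorem: for finitely generated $N$ one has $\Tor_1^R\bigl(N,E_R(R/\mathfrak q)^\vee\bigr)\cong\Hom_R\bigl(\Ext^1_R(N,E_R(R/\mathfrak q)),E_R(R/\mathfrak p)\bigr)=0$ since $E_R(R/\mathfrak q)$ is injective (this is the paper's Lemma \ref{4.5}(2) with $E_R(R/\mathfrak p)$ in place of $E_R(k)$), and flatness can be tested against finitely generated modules; so your own adjunction already yields the flatness you need.
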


\section{Preliminaries}
In this section we will fix the notation of the paper and summarize a few preliminaries and auxiliary results. Notice that the following two Propositions are known for the maximal ideal $\mathfrak m$ we will extend them to any prime ideal $\mathfrak p\in \Spec(R)$.

\begin{Proposition}\label{1.1}
Let $(R,\mathfrak m)$ be a ring and $\mathfrak p\in \Spec(R)$. Then we have:
\begin{itemize}
\item[(a)] $E_R(R/\mathfrak p)\cong E_{R_\mathfrak q}(R_\mathfrak q/\mathfrak pR_\mathfrak q)$ for all $\mathfrak q\in V(\mathfrak p)$.
\item[(b)] Suppose that $M$ is an $R$-module and $\mathfrak p\in\Supp_R(M)$ then $M^\vee$ has a structure of an $R_\mathfrak p$-module.
\end{itemize}
\end{Proposition}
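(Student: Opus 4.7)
The plan for part (a) is to produce an $R_\mathfrak{q}$-action on $E := E_R(R/\mathfrak p)$ by showing that every element of $R \setminus \mathfrak{q}$ acts as an automorphism of $E$, and then to verify that with this enlarged scalar action $E$ is injective over $R_\mathfrak{q}$ and an essential extension of $R_\mathfrak{q}/\mathfrak{p}R_\mathfrak{q}$. Because $\mathfrak{q} \supseteq \mathfrak{p}$ we have $R \setminus \mathfrak{q} \subseteq R \setminus \mathfrak{p}$, so it suffices to prove the bijectivity statement for arbitrary $s \in R \setminus \mathfrak{p}$.

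The crux is to prove that for each such $s$, multiplication by $s$ on $E$ is bijective. Injectivity is immediate from $\Ass_R(E) = \{\mathfrak{p}\}$. For surjectivity, given $x \in E$, the cyclic submodule $Rx$ satisfies $\Ass_R(Rx) \subseteq \{\mathfrak{p}\}$, hence $\sqrt{\Ann_R(x)} = \mathfrak{p}$, so I would define $\psi \colon sR + \Ann_R(x) \to E$ by $sr + a \mapsto rx$ for $a \in \Ann_R(x)$; well-definedness uses that $s$ is a nonzerodivisor on $E$. Injectivity of $E$ then extends $\psi$ to a map $\Phi \colon R \to E$, and $\Phi(1)$ is the required preimage of $x$ under multiplication by $s$.

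Once the $R_\mathfrak{q}$-action is in place, injectivity of $E$ as an $R_\mathfrak{q}$-module follows from its injectivity over $R$ together with the observation that any $R$-linear map from an $R_\mathfrak{q}$-module into $E$ is automatically $R_\mathfrak{q}$-linear, since $R \setminus \mathfrak{q}$ acts invertibly on the target. For essentialness of $R_\mathfrak{q}/\mathfrak{p}R_\mathfrak{q} \hookrightarrow E$ over $R_\mathfrak{q}$, any $0 \neq x \in E$ admits $r \in R$ with $0 \neq rx \in R/\mathfrak{p}$ by the known essentialness of $R/\mathfrak p \hookrightarrow E$ over $R$, and $rx$ already lies in the overgroup $R_\mathfrak{q}/\mathfrak{p}R_\mathfrak{q}$. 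These two properties identify $E$ with $E_{R_\mathfrak{q}}(R_\mathfrak{q}/\mathfrak{p}R_\mathfrak{q})$.

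For part (b), the $R_\mathfrak{p}$-structure on $E$ (the case $\mathfrak{q} = \mathfrak{p}$ of part (a)) immediately induces one on $M^\vee = \Hom_R(M, E)$ via postcomposition with the $R_\mathfrak{p}$-action on $E$; equivalently, the adjunction $M^\vee \cong \Hom_{R_\mathfrak{p}}(M_\mathfrak{p}, E)$ carries this structure, and the hypothesis $\mathfrak{p} \in \Supp_R(M)$ ensures $M_\mathfrak{p} \neq 0$ so that the statement is meaningful. I expect the main hurdle to be the surjectivity step in the bijectivity argument: the injectivity is formal from $\Ass_R(E) = \{\mathfrak{p}\}$, but surjectivity requires the injective-extension trick combined with the observation that elements of $E$ are annihilated by powers of $\mathfrak{p}$. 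Everything else then follows by routine transport of structure.
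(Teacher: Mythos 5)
Your proposal is correct. For part (b) it coincides with the paper's own argument: the paper simply observes that $E_R(R/\mathfrak p)$ carries an $R_\mathfrak p$-module structure and defines $(\frac{r}{s}\cdot f)(m):=\frac{r}{s}f(m)$, i.e.\ postcomposition, exactly as you do; like you, the paper never really uses the hypothesis $\mathfrak p\in\Supp_R(M)$, which only guarantees the statement is not vacuous. For part (a) the paper gives no argument at all --- it cites \cite[Theorem 18.4]{mat} --- whereas you reconstruct the standard proof behind that reference: multiplication by any $s\in R\setminus\mathfrak p$ is injective on $E=E_R(R/\mathfrak p)$ since the zerodivisors on $E$ lie in the union of its associated primes, which is $\{\mathfrak p\}$, and surjective by your extension trick (the map $sR+\Ann_R(x)\to E$, $sr+a\mapsto rx$, is well defined thanks to that injectivity, and extends over $R$ by injectivity of $E$, producing a preimage of $x$); this makes $E$ an $R_\mathfrak q$-module for every $\mathfrak q\in V(\mathfrak p)$, injective over $R_\mathfrak q$ because $R$-linear maps from $R_\mathfrak q$-modules into $E$ are automatically $R_\mathfrak q$-linear, and an essential extension of $R_\mathfrak q/\mathfrak p R_\mathfrak q$. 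The incidental remark that $\mathfrak p^n x=0$ for some $n$ is true but not actually needed for well-definedness. The only point you should make explicit is that $R_\mathfrak q/\mathfrak p R_\mathfrak q$ genuinely embeds in $E$: the natural map $(R/\mathfrak p)_\mathfrak q\to E$ induced by the $R_\mathfrak q$-structure is injective precisely because elements of $R\setminus\mathfrak q$ act injectively on $E$; with that noted, your essentiality step goes through. In short, your route buys a self-contained proof where the paper leans on Matsumura, but the underlying mathematics is the same.
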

\begin{proof}
For the proof of the statement $(a)$ we refer to \cite[Theorem 18.4]{mat}. We only prove the last claim. For this purpose note that $E_R(R/\mathfrak p)$ is an $R_\mathfrak p$-module. Now we define $(\frac{r}{s}\cdot f)(m):= \frac{r}{s}f(m)$ where $\frac{r}{s}\in R_\mathfrak p, m\in M$ and $f\in M^\vee$. Then it defines the structure of $M^\vee$ to be an $R_\mathfrak p$-module which completes the proof of Proposition.
\end{proof}
\begin{Proposition}\label{4}
Let $(R,\mathfrak m)$, $(S,\mathfrak n)$  be local rings and $R\rightarrow S$ be a surjective local homomorphism $i.e.$ $ S= R/I$ for some ideal $I\subseteq R$. Suppose that $M$ is an $R$-module and $\mathfrak p\in \Spec(R)$. Then the following are true:
\begin{itemize}
\item[(a)] Suppose that $N$ is an $S$-module then for any $\mathfrak p\in V(I)$
\[
N^\vee\cong \Hom_S(N, E_S(S/\mathfrak pS)).
\]
\item[(b)] For all $\mathfrak q\in V(\mathfrak p)$ we have
\[
M^\vee\cong \Hom_{R\mathfrak q}(M_\mathfrak q, E_{R_\mathfrak q}(R_\mathfrak q/\mathfrak pR_\mathfrak q)).
\]
\end{itemize}
\end{Proposition}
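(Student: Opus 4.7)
The plan is to reduce both parts to standard change-of-rings adjunctions for $\Hom$, combined with the identification of $E_R(R/\mathfrak p)$ provided by Proposition \ref{1.1}(a).

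For part (a), the key preliminary lemma I would establish is that for $I\subseteq \mathfrak p$ there is a natural $S$-module isomorphism
\[
\Hom_R(S, E_R(R/\mathfrak p))\cong E_S(S/\mathfrak pS).
\]
The left hand side equals $(0:_{E_R(R/\mathfrak p)} I)$, which is $S$-injective because $\Hom_R(S,-)$ is right adjoint to restriction of scalars and so preserves injectives. Since $I\subseteq \mathfrak p$, the canonical inclusion $R/\mathfrak p \hookrightarrow E_R(R/\mathfrak p)$ factors through this submodule, yielding $S/\mathfrak pS \hookrightarrow \Hom_R(S, E_R(R/\mathfrak p))$; this is an essential extension of $S$-modules because any nonzero $S$-submodule of the target is in particular a nonzero $R$-submodule of $E_R(R/\mathfrak p)$, and the latter meets $R/\mathfrak p$ nontrivially by essentiality in the ambient injective hull. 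Injectivity together with essentialness then identifies $\Hom_R(S, E_R(R/\mathfrak p))$ with $E_S(S/\mathfrak pS)$.

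Granting this, part (a) follows from the Hom-tensor adjunction for the surjection $R\to S$: for any $S$-module $N$,
\[
\Hom_R(N, E_R(R/\mathfrak p))\cong \Hom_S\bigl(N, \Hom_R(S, E_R(R/\mathfrak p))\bigr)\cong \Hom_S(N, E_S(S/\mathfrak pS)).
\]

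For part (b), I would apply Proposition \ref{1.1}(a) to rewrite $E_R(R/\mathfrak p)\cong E_{R_\mathfrak q}(R_\mathfrak q/\mathfrak pR_\mathfrak q)$ for $\mathfrak q\in V(\mathfrak p)$; the target is naturally an $R_\mathfrak q$-module. The standard localization-Hom adjunction
\[
\Hom_R(M,E)\cong \Hom_{R_\mathfrak q}(M_\mathfrak q, E),
\]
valid for any $R_\mathfrak q$-module $E$, then immediately yields the desired isomorphism.

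The only step demanding genuine verification is the essentialness of $S/\mathfrak pS$ inside $(0:_{E_R(R/\mathfrak p)} I)$ needed for the key lemma in (a); the rest is a formal manipulation of well-known adjunctions, and once the lemma is in place the proof is essentially automatic.
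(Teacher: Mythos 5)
Your proposal is correct, and it is in fact more informative than what the paper provides: for this Proposition the paper offers no argument at all, only the citation ``\cite[Example 3.6 and Excercise 13]{h}''. Your route is the standard one that such references use. For (a), the key lemma $\Hom_R(S,E_R(R/\mathfrak p))\cong E_S(S/\mathfrak pS)$ is proved exactly as you indicate: $\Hom_R(S,-)$ is right adjoint to the exact restriction functor, so $(0:_{E_R(R/\mathfrak p)}I)$ is $S$-injective; the copy of $R/\mathfrak p$ lands in it because $I\subseteq\mathfrak p$, and your essentiality argument (a nonzero $S$-submodule is a nonzero $R$-submodule of the ambient injective hull, hence meets $R/\mathfrak p\cong S/\mathfrak pS$) is complete, since an injective essential extension is the injective hull. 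Then Hom-tensor adjunction for $R\to S$ finishes (a). For (b), the only point to make explicit is why the localization adjunction $\Hom_R(M,E)\cong\Hom_{R_\mathfrak q}(M_\mathfrak q,E)$ applies: $E_R(R/\mathfrak p)$ carries an $R_\mathfrak q$-module structure because elements of $R\setminus\mathfrak q\subseteq R\setminus\mathfrak p$ act bijectively on it (equivalently, via the identification of Proposition \ref{1.1}(a) with $E_{R_\mathfrak q}(R_\mathfrak q/\mathfrak pR_\mathfrak q)$), after which the isomorphism is the formal adjunction $\Hom_{R_\mathfrak q}(M\otimes_RR_\mathfrak q,E)\cong\Hom_R(M,\Hom_{R_\mathfrak q}(R_\mathfrak q,E))$. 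So there is no gap; your write-up supplies the proof the paper delegates to the literature.
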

\begin{proof}
For the proof see \cite[Example 3.6 and Excercise 13]{h}.
\end{proof}
\begin{Lemma}\label{4.5}
For a local ring $R$ let $M,N$ be any $R$-modules. Then for all $i\in \mathbb{Z}$ the following hold:
\begin{itemize}
\item[(1)] $\Ext^{i}_R(N,D(M))\cong D(\Tor_{i}^R(N, M))$.

\item[(2)] If in addition $N$ is finitely generated then
\[
D(\Ext^{i}_R(N,M))\cong \Tor_{i}^R(N, D(M)).
\]
\item[(3)] $\Hom_R(\varinjlim M_n, M)\cong \lim\limits_{\longleftarrow} \Hom_R(M_n, M)$
\end{itemize}
where $\{M_n: n\in \mathbb{N}\}$ is a direct system of $R$-modules.
\end{Lemma}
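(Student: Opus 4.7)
The basic mechanism for all three parts is the Hom--tensor adjunction combined with the exactness of the functor $D(\cdot)=\Hom_R(\cdot,E_R(k))$, which follows from the injectivity of $E_R(k)$. In particular, $D$ is exact and takes direct sums to direct products, so it commutes with taking (co)homology of complexes, changing the degree variance.

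For part $(1)$, the plan is to start from the natural adjunction isomorphism
\[
\Hom_R(N,D(M))=\Hom_R(N,\Hom_R(M,E_R(k)))\cong \Hom_R(N\otimes_R M,E_R(k))=D(N\otimes_R M).
\]
Choose a projective resolution $P_\bullet\to N$ and apply this to the complex $P_\bullet$: one gets an isomorphism of complexes $\Hom_R(P_\bullet,D(M))\cong D(P_\bullet\otimes_R M)$. Taking cohomology on the left yields $\Ext^i_R(N,D(M))$; on the right, exactness of $D$ lets it pass through homology, producing $D(H_i(P_\bullet\otimes_R M))=D(\Tor^R_i(N,M))$. No finiteness assumption on $N$ is needed here.

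For part $(2)$, I would use a projective resolution $F_\bullet\to N$ with each $F_i$ \emph{finitely generated} free, which is possible precisely because $N$ is finitely generated over the Noetherian ring $R$. For such $F_i$, the standard identifications
\[
\Hom_R(F_i,M)\cong F_i^{*}\otimes_R M,\qquad \Hom_R(F_i^{*},D(M))\cong F_i^{**}\otimes_R D(M)\cong F_i\otimes_R D(M),
\]
together with the adjunction used in $(1)$, give a natural isomorphism of complexes $D(\Hom_R(F_\bullet,M))\cong F_\bullet\otimes_R D(M)$. The left-hand side has $i$-th homology $D(\Ext^i_R(N,M))$ (again using exactness of $D$ to commute it past cohomology and flip the degree), while the right-hand side has $i$-th homology $\Tor^R_i(N,D(M))$. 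The finitely generated hypothesis on $N$ is used in exactly one place, namely to ensure $F_i$ is reflexive and finitely generated projective so that the Hom--tensor identifications above are valid; this is the only real obstacle to removing the hypothesis.

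For part $(3)$, the argument is the categorical one: a homomorphism out of $\varinjlim M_n$ is, by the universal property of the colimit, the same as a compatible family of homomorphisms from the $M_n$, which is the universal property of $\varprojlim \Hom_R(M_n,M)$. Making this canonical isomorphism $R$-linear is routine. Alternatively, one can observe that $\Hom_R(-,M)$ is a right adjoint (on the opposite category) and hence sends colimits to limits.
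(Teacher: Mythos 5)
Your proposal is correct, and it is the standard argument: Hom--tensor adjunction together with the exactness of $D(\cdot)$ (injectivity of $E_R(k)$) for (1) and (2), a resolution by finitely generated free modules to justify the identification $D(\Hom_R(F,M))\cong F\otimes_R D(M)$ in (2), and the universal property of colimits for (3). The paper itself gives no argument for this lemma, deferring entirely to the cited references (Huneke's Example 3.6 and Weibel), where essentially the same proofs appear, so there is nothing to compare beyond noting that you have supplied the details the paper omits.
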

\begin{proof}
For the proof see \cite[Example 3.6]{h} and \cite{w}.
\end{proof}
In order to prove the next results we need a few more preparations. Let $I\subseteq R$ be an ideal and for $i= 1,\dots,s$ let $\mathfrak{q_i}$ belongs to a minimal primary decomposition of the zero ideal in $R$ where $\Rad (\mathfrak{q_i})= \mathfrak{p_i}$ . Then we denote
$u(I)$ by the intersection of all $\mathfrak{p_i}$-primary
components $\mathfrak{q_i}$ such that $\dim_R(R/(\mathfrak{p_i}+I))> 0$. Moreover we denote the functor of the global transform by $T(\cdot):=  \lim\limits_{\longrightarrow} \Hom_R(\mathfrak{m}^\alpha, \cdot)$. Also note that the local cohomology functor with respect to $I$ is denoted by $H^i_I(\cdot), i \in \mathbb Z,$ see \cite{b} for its definition and applications in local algebra.
\begin{Lemma}\label{45.111}
Let $R$ be a local ring and $I\subseteq R$ be an ideal. Then there is an exact sequence:
\[
0\to \hat{R}^I/u(I\hat{R}^I)\to \lim_{\longleftarrow} T(R/I^s)\to \lim_{\longleftarrow} H^1_\mathfrak{m} (R/I^s)\to 0
\]
where $\hat{R}^I$ denotes the $I$-adic completion of $R$.
\end{Lemma}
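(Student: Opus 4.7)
The plan proceeds in three stages: derive a four-term exact sequence for each $s$, pass to the inverse limit while controlling $\lim^1$, and identify the resulting kernel term.

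First, for each $s \geq 1$, apply $\Hom_R(-, R/I^s)$ to the short exact sequence $0 \to \mathfrak{m}^\alpha \to R \to R/\mathfrak{m}^\alpha \to 0$ and take the direct limit as $\alpha \to \infty$. This yields the standard four-term local-cohomology exact sequence
\[
0 \to H^0_\mathfrak{m}(R/I^s) \to R/I^s \to T(R/I^s) \to H^1_\mathfrak{m}(R/I^s) \to 0,
\]
functorial in $s$. Split it into two short exact sequences
\[
0 \to H^0_\mathfrak{m}(R/I^s) \to R/I^s \to K_s \to 0 \quad \text{and} \quad 0 \to K_s \to T(R/I^s) \to H^1_\mathfrak{m}(R/I^s) \to 0,
\]
with $K_s := (R/I^s)/H^0_\mathfrak{m}(R/I^s)$.

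Next, pass both sequences to the inverse limit over $s$, using $\lim_{\longleftarrow} R/I^s = \hat{R}^I$. Since $H^0_\mathfrak{m}(R/I^s)$ is a submodule of the Noetherian module $R/I^s$ annihilated by a power of $\mathfrak{m}$, it has finite length; an inverse system of finite-length modules trivially satisfies the Mittag--Leffler condition, so $\lim^1_{\longleftarrow} H^0_\mathfrak{m}(R/I^s) = 0$. Because the transition maps $K_{s+1}\to K_s$ are surjective (being induced by the surjections $R/I^{s+1}\to R/I^s$), the system $\{K_s\}$ is also Mittag--Leffler and $\lim^1_{\longleftarrow} K_s = 0$. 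Concatenating the two resulting short exact sequences gives
\[
0 \to \hat{R}^I / \lim_{\longleftarrow} H^0_\mathfrak{m}(R/I^s) \to \lim_{\longleftarrow} T(R/I^s) \to \lim_{\longleftarrow} H^1_\mathfrak{m}(R/I^s) \to 0.
\]

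The remaining, and main, obstacle is to identify $\lim_{\longleftarrow} H^0_\mathfrak{m}(R/I^s)$ with $u(I\hat{R}^I)$ inside $\hat{R}^I$. Unwinding the definitions, the left-hand side equals $\bigcap_s \bigl(I^s\hat{R}^I :_{\hat{R}^I}(\mathfrak{m}\hat{R}^I)^\infty\bigr)$, i.e.\ the set of $x\in\hat{R}^I$ such that $\mathfrak{m}^{N(s)}x \subseteq I^s\hat{R}^I$ for each $s$ (some $N(s)$). Taking a minimal primary decomposition $(0) = \bigcap_j Q_j$ of the zero ideal in $\hat{R}^I$ with $\Rad(Q_j) = P_j$, one applies the standard identity $(J:\mathfrak{n}^\infty) = \bigcap_{P_j\not\supseteq\mathfrak{n}} Q_j$ for a primary decomposition, together with an Artin--Rees argument, to show that the components $Q_j$ surviving in the intersection over $s$ are exactly those for which $\dim\bigl(\hat{R}^I/(P_j + I\hat{R}^I)\bigr) > 0$; by the definition of $u$, their intersection is $u(I\hat{R}^I)$. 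Matching the analytic limit of $\mathfrak{m}$-torsion submodules with this combinatorial object defined via primary decomposition of the zero ideal is the real content of the lemma; the verification is of the type carried out by Schenzel in \cite{p1}.
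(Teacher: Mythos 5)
Your proposal follows essentially the same route as the paper's proof: the four-term global-transform sequence for each $s$, splitting it into two short exact sequences, passing to inverse limits via the Mittag--Leffler condition (the paper cites \cite[Proposition 10.2]{a}, your explicit finite-length and surjectivity arguments are the same point in more detail), and identifying $\lim_{\longleftarrow} H^0_\mathfrak{m}(R/I^s)$ with $u(I\hat{R}^I)$, which the paper simply quotes from Schenzel's Lemma 4.1 and which you sketch and likewise defer to Schenzel. The only correction needed is the reference for that last identification: it is \cite[Lemma 4.1]{p5}, not \cite{p1}.
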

\begin{proof}
Note that for each $\alpha\in \mathbb{N}$ there is an exact sequence
\[
0\to \mathfrak{m}^\alpha\to R\to R/\mathfrak{m}^\alpha\to 0.
\]
For $s\in \mathbb{N}$ apply the functor $\Hom_R(\cdot, R/I^s)$ to this sequence. Then it induces the following exact sequence
\[
0\to \Hom_R(R/\mathfrak{m}^\alpha, R/I^s)\to R/I^s\to \Hom_R(\mathfrak{m}^\alpha, R/I^s)\to \Ext^1_R(R/\mathfrak{m}^\alpha, R/I^s)\to 0
\]
Now take the direct limit of this we again get an exact sequence
\[
0\to H^{0}_\mathfrak{m}(R/I^s)\to R/I^s\to T(R/I^s)\to H^{1}_\mathfrak{m}(R/I^s)\to 0
\]
for each $s\in \mathbb{N}$. It induces the following two short exact sequences
\[
0\to H^{0}_\mathfrak{m}(R/I^s)\to R/I^s\to R/I^s:\mathfrak{m}\to 0\text { and }
\]
\[
0\to R/I^s:\mathfrak{m}\to T(R/I^s)\to H^{1}_\mathfrak{m}(R/I^s)\to 0.
\]
Note that the inverse systems at the left hand side of these short exact sequences satisfy the Mittag-Leffler condition. So if we take the inverse limit to them then the resulting sequences will be exact also (see \cite[Proposition 10.2]{a}). If we combined these resulting sequences we get the following exact sequence
\[
0\to \lim_{\longleftarrow}H^{0}_\mathfrak{m}(R/I^s)\to \hat{R}^I\to \lim_{\longleftarrow}T(R/I^s)\to \lim_{\longleftarrow}H^{1}_\mathfrak{m}(R/I^s)\to 0.
\]
But by \cite[Lemma 4.1]{p5} there is an isomorphism $\lim\limits_{\longleftarrow}H^{0}_\mathfrak{m}(R/I^s)\cong  u(I\hat{R}^I).$ Then the exactness of the last sequence provides the required statement.
\end{proof}
In the next context we need the definition of the canonical module. To do this we recall the Local Duality Theorem (see \cite{goth}). Let $(S,{\mathfrak n})$ be a local Gorenstein ring of dimension $t$ and $N$ be a finitely generated $R$-module where $R= S/I$ for some ideal $I\subseteq S$. Then It was shown by Grothendieck (see \cite{goth}) that there is an isomorphism
\[
H^i_{\mathfrak m} (N)\cong \Hom_R(\Ext^{t-i}_S(N, S), E)
\]
for all $i\in \mathbb N$ (see \cite{goth}). For a slight extension of the Local Duality to an arbitrary module $M$ over a Cohen-macaulay local ring see \cite[Lemma 3.1]{waq1}. Note that a more general result was proved by Hellus (see \cite[Theorem 6.4.1]{he}). Now we are able to define the canonical module as follows:

\begin{Definition}
With the notation of the above Local Duality Theorem we define
\[
K_N:= \Ext^{t-r}_S(N, S), \dim(N)= r
\]
as the canonical module of $N$. It was introduced by Schenzel (see \cite{p4}) as
the generalization of the canonical module of a Cohen-Macaulay ring (see e.g. \cite{her}).
\end{Definition}
\begin{Corollary}\label{45}
With the notation of Lemma \ref{45.111} suppose that $I$ is a one dimensional ideal. Then the following are true:
\begin{itemize}
\item[(a)] There is an exact sequence
\[
0\to \hat{R}^I/u(I\hat{R}^I)\to \oplus_{i=1}^{s} \hat{R}_\mathfrak{p_i}\to \lim_{\longleftarrow} H^1_\mathfrak{m} (R/I^s)\to 0
\]
where $\mathfrak{p_i}\in \Ass_R(R/I)$ such that $\dim (R/\mathfrak{p_i})= 1$ for $i= 1,\dots, s.$
\item[(b)] Suppose in addition that $R$ is complete Cohen-Macaulay then there is an exact sequence
\[
0\to R/u(I)\to \oplus_{i=1}^{s} \hat{R}_\mathfrak{p_i}\to D(H^{n-1}_I(K_R))\to 0.
\]
\end{itemize}
\end{Corollary}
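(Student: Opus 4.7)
The plan is to derive both statements from the exact sequence of Lemma \ref{45.111} by identifying the middle term $\lim_{\longleftarrow} T(R/I^{s})$ (for (a)) and, in addition, the right-hand term $\lim_{\longleftarrow} H^{1}_{\mathfrak{m}}(R/I^{s})$ together with the left-hand term $\hat{R}^{I}/u(I\hat{R}^{I})$ (for (b)), making essential use of the one-dimensionality of $I$ in the first case and the complete Cohen--Macaulay hypothesis in the second.

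For part (a) the central step will be to establish $T(R/I^{s})\cong \bigoplus_{i=1}^{s} R_{\mathfrak{p}_i}/I^{s}R_{\mathfrak{p}_i}$ for every $s$. Since $\dim R/I=1$, any non-maximal prime containing $I$ is a minimal prime of $I$ of dimension one; a putative embedded associated prime of $R/I$ would properly contain such a minimal prime and therefore have dimension zero, forcing it to coincide with $\mathfrak{m}$. Consequently $\Supp(R/I^{s})\setminus\{\mathfrak{m}\}=\{\mathfrak{p}_1,\dots,\mathfrak{p}_s\}$, a finite set of closed points of the punctured spectrum $U=\Spec R\setminus\{\mathfrak{m}\}$. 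Since $T(R/I^{s})=\Gamma(U,\widetilde{R/I^{s}})$ and a quasi-coherent sheaf on $U$ supported at finitely many closed points equals the direct sum of its stalks globally, the decomposition follows, with stalks $(R/I^{s})_{\mathfrak{p}_i}=R_{\mathfrak{p}_i}/I^{s}R_{\mathfrak{p}_i}$. Because $\mathfrak{p}_i$ is minimal over $I$, the ideal $IR_{\mathfrak{p}_i}$ is $\mathfrak{p}_iR_{\mathfrak{p}_i}$-primary in the local ring $R_{\mathfrak{p}_i}$, so the two adic topologies coincide and $\lim_{\longleftarrow} R_{\mathfrak{p}_i}/I^{s}R_{\mathfrak{p}_i}=\hat{R}_{\mathfrak{p}_i}$. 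The inverse limit commutes with the finite direct sum, and substitution into Lemma \ref{45.111} produces the sequence in (a).

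For part (b), two further identifications are required, both powered by the complete Cohen--Macaulay hypothesis. First, local duality gives $H^{1}_{\mathfrak{m}}(R/I^{s})\cong D(\Ext^{n-1}_{R}(R/I^{s},K_R))$ for every $s$, and since $D$ converts direct limits into inverse limits (Lemma \ref{4.5}(3)),
\[
\lim_{\longleftarrow} H^{1}_{\mathfrak{m}}(R/I^{s}) \cong D\Bigl(\lim_{\longrightarrow}\Ext^{n-1}_{R}(R/I^{s},K_R)\Bigr) \cong D(H^{n-1}_{I}(K_R)),
\]
the last isomorphism being the standard description of local cohomology as a direct limit of $\Ext$'s. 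Secondly, I will show $R\cong\hat{R}^{I}$ whenever $R$ is $\mathfrak{m}$-adically complete and $I\subseteq\mathfrak{m}$, from which $\hat{R}^{I}/u(I\hat{R}^{I})\cong R/u(I)$ is immediate. Injectivity of $R\to\hat{R}^{I}$ is Krull's intersection theorem applied to $I$. For surjectivity, any $I$-adic Cauchy sequence $(r_s)$ in $R$ is a fortiori $\mathfrak{m}$-adic Cauchy because $I^{s}\subseteq\mathfrak{m}^{s}$, hence converges to some $r\in R$ by $\mathfrak{m}$-completeness; since each $I^{s}$ is $\mathfrak{m}$-adically closed (Krull applied to $R/I^{s}$), the limit $r$ satisfies $r\equiv r_{s}\pmod{I^{s}}$ for every $s$, so $r$ represents the prescribed element of $\hat{R}^{I}$. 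With these two ingredients, (b) follows by substitution into (a).

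The principal technical step is the sheaf-theoretic decomposition of $T(R/I^{s})$ in (a); once it and the equality $R=\hat{R}^{I}$ are in hand, the remaining manipulations (commutation of inverse limits with finite direct sums, local duality, and the exchange of $D$ with direct limits) are routine.
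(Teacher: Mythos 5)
Your proposal is correct, and for part (b) it is essentially the paper's own argument: local duality over the complete Cohen--Macaulay ring in the form $H^{1}_{\mathfrak m}(R/I^{s})\cong D(\Ext^{n-1}_{R}(R/I^{s},K_R))$ (the paper reaches this via $D(H^{i}_{\mathfrak m})\cong\Ext^{n-i}_{R}(\cdot,K_R)$ plus double Matlis duality for Artinian modules), then Lemma \ref{4.5}(3) to turn the inverse limit into $D(H^{n-1}_{I}(K_R))$, and the identification $\hat{R}^{I}\cong R$ from $\mathfrak m$-adic completeness, which the paper leaves implicit and you verify in detail (your closedness argument via Krull applied to $R/I^{s}$ is exactly what is needed). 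Where you genuinely diverge is part (a): the paper chooses a parameter $x\in\mathfrak m$ for all $R/I^{s}$, identifies $T(R/I^{s})\cong R_{x}/I^{s}R_{x}$, passes to the localization at $S=\bigcap_{i}(R\setminus\mathfrak p_i)$, and splits the semilocal ring $R_S/I^{s}R_S$ by the Chinese Remainder Theorem; you instead invoke the Deligne-type description $T(M)\cong\Gamma(U,\widetilde M)$ on the punctured spectrum $U$ and decompose the coherent sheaf $\widetilde{R/I^{s}}$, supported on the finitely many closed points $\mathfrak p_1,\dots,\mathfrak p_t$ of $U$, into its stalks. Both routes land on the same key isomorphism $T(R/I^{s})\cong\bigoplus_{i} R_{\mathfrak p_i}/I^{s}R_{\mathfrak p_i}$, after which the observation that $IR_{\mathfrak p_i}$ is $\mathfrak p_iR_{\mathfrak p_i}$-primary and passage to the inverse limit coincide in the two proofs. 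Your sheaf-theoretic version is arguably cleaner in that it avoids choosing the element $x$ and the Local--Global/CRT step, at the cost of quoting the ideal-transform--sections correspondence (available in Brodmann--Sharp, the paper's reference for local cohomology); the paper's version stays entirely inside elementary commutative algebra. One small point worth making explicit in your write-up is that your isomorphisms $T(R/I^{s})\cong\bigoplus_i R_{\mathfrak p_i}/I^{s}R_{\mathfrak p_i}$ are natural in $s$, hence compatible with the inverse system, before taking $\lim_{\longleftarrow}$; this is immediate from the naturality of localization and of the Deligne isomorphism, and the same compatibility is tacitly used in the paper.
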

\begin{proof}
Note that for the proof of the statement $(a)$ it will be enough to prove the following isomorphism
\[
\lim_{\longleftarrow}T(R/I^s)\cong \oplus_{i=1}^{s} \hat{R}_\mathfrak{p_i}
\]
(by Lemma \ref{45.111}). Since $\dim (R/I)= 1$ it implies that there exists an element $x\in \mathfrak{m}$ such that $x$ is a parameter of $R/I^s$ for all $s\in \mathbb{N}$. Then it induces the following isomorphism
\[
T(R/I^s)\cong R_x/I^sR_x \text { for all $s\in \mathbb{N}$}.
\]
Now suppose that $S= \cap_{i=1}^{s} (R\setminus \mathfrak{p_i})$ then $x\in S$ and by Local Global Principal for each $s\in \mathbb{N}$ there is an isomorphism
\[
R_x/I^sR_x\cong R_S/I^sR_S.
\]
Since $R_S$ is a semi local ring so there is an isomorphism
\[
R_S/I^sR_S\cong \oplus_{i=1}^{s} R_\mathfrak{p_i}/I^sR_\mathfrak{p_i} \text { for all $s\in \mathbb{N}$}.
\]
(by Chinese Remainder Theorem). Since $\dim (R/\mathfrak{p_i})= 1$ for $i= 1,\dots, s$ so it follows that $\Rad (IR_\mathfrak{p_i})= \mathfrak{p_i}R_\mathfrak{p_i}$. By passing to the inverse limit of the last isomorphism we have
\[
\lim_{\longleftarrow}T(R/I^s)\cong \oplus_{i=1}^{s} \hat{R}_\mathfrak{p_i}
\]
which is the required isomorphism. To prove $(b)$ note that the canonical module of $K_R$ of $R$ exists (since $R$ is complete Cohen-Macaulay). So by \cite[Lemma 3.1]{waq1} there is an isomorphism
\[
 D(H^i_{\mathfrak m} (R/I^s))\cong \Ext^{n-i}_R(R/I^s, K_R)
\]
for each $i\in \mathbb N$. Since $H^i_{\mathfrak m} (R/I^s)$ is an Artinian $R$-module and $R$ is complete so by Matlis Duality its double Matlis dual is itself. That is for each $s\in \mathbb N$ there is an isomorphism
\[
H^i_{\mathfrak m} (R/I^s)\cong D(\Ext^{n-i}_R(R/I^s, K_R)).
\]
By passing to the inverse limit of this isomorphism induces the following isomorphism
\[
\lim_{\longleftarrow} H^i_\mathfrak{m} (R/I^s)\cong \lim_{\longleftarrow} D(\Ext^{n-i}_R(R/I^s, K_R)).
\]
Now take the Matlis dual of the isomorphism $H^{n-i}_I(K_R)\cong \lim\limits_{\longrightarrow} \Ext^{n-i}_R(R/I^s, K_R).$ It induces that
\[
\lim_{\longleftarrow} H^i_\mathfrak{m} (R/I^s)\cong D(H^{n-i}_I(K_R))
\]
(by Lemma \ref{4.5} $(3)$). For $i= 1$ and by the exact sequence in $(a)$ we can get the exact sequence of $(b)$ (since $R$ is complete). This finishes the proof of the Corollary.
\end{proof}
\section{On Matlis Duality}

In this section we will prove one of the main result. Actually this result was proved by Matlis for the maximal ideal $\mathfrak m$. Here we will extend this to any prime ideal $\mathfrak p\in \Spec(R)$.
\begin{Theorem}\label{2}
Let $(R,\mathfrak m)$ be a ring and $\mathfrak p\in \Spec(R)$. Then $E_R(R/\mathfrak p)$ admits the structure of an ${\hat{R}}^\mathfrak p$-module and $E_R(R/\mathfrak p)\cong E_{{\hat{R}}^\mathfrak p}(\hat{R}^\mathfrak p/\mathfrak p\hat{R}^\mathfrak p).$
\end{Theorem}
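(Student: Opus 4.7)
The plan is to exhibit both $E_R(R/\mathfrak p)$ and $E_{\hat R^\mathfrak p}(\hat R^\mathfrak p/\mathfrak p \hat R^\mathfrak p)$ as the same directed union of $\mathfrak p^n$-torsion submodules, each of which is identified, via Proposition \ref{4}, with the injective hull of $R/\mathfrak p$ over $R/\mathfrak p^n = \hat R^\mathfrak p/\mathfrak p^n \hat R^\mathfrak p$.

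First I would establish that every element of $E_R(R/\mathfrak p)$ is annihilated by some power of $\mathfrak p$. By Proposition \ref{1.1}(a) there is an isomorphism $E_R(R/\mathfrak p) \cong E_{R_\mathfrak p}(R_\mathfrak p/\mathfrak p R_\mathfrak p)$, and over the local ring $(R_\mathfrak p,\mathfrak p R_\mathfrak p)$ every element of the injective hull of the residue field is killed by a power of the maximal ideal. Setting $E_n := \Hom_R(R/\mathfrak p^n, E_R(R/\mathfrak p)) = (0:_{E_R(R/\mathfrak p)} \mathfrak p^n)$ gives $E_R(R/\mathfrak p) = \bigcup_n E_n$. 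The $\hat R^\mathfrak p$-module structure is then forced: using $\hat R^\mathfrak p/\mathfrak p^n \hat R^\mathfrak p \cong R/\mathfrak p^n$, for $r \in \hat R^\mathfrak p$ and $x \in E_n$ one declares $r\cdot x := (r \bmod \mathfrak p^n)\,x$, and routinely checks independence of $n$ and compatibility with the original $R$-action.

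Next I would apply Proposition \ref{4}(a) to the surjection $R \to R/\mathfrak p^n$ with $\mathfrak p \in V(\mathfrak p^n)$ and $N = R/\mathfrak p^n$ to obtain
\[
E_n \;\cong\; \Hom_{R/\mathfrak p^n}(R/\mathfrak p^n, E_{R/\mathfrak p^n}(R/\mathfrak p)) \;\cong\; E_{R/\mathfrak p^n}(R/\mathfrak p).
\]
Carrying out precisely the same computation over $\hat R^\mathfrak p$ for the surjection $\hat R^\mathfrak p \to \hat R^\mathfrak p/\mathfrak p^n \hat R^\mathfrak p$ yields
\[
E'_n := (0:_{E_{\hat R^\mathfrak p}(\hat R^\mathfrak p/\mathfrak p\hat R^\mathfrak p)} \mathfrak p^n \hat R^\mathfrak p) \;\cong\; E_{\hat R^\mathfrak p/\mathfrak p^n\hat R^\mathfrak p}(\hat R^\mathfrak p/\mathfrak p\hat R^\mathfrak p) \;\cong\; E_{R/\mathfrak p^n}(R/\mathfrak p),
\]
where the final step uses $\hat R^\mathfrak p/\mathfrak p^n \hat R^\mathfrak p \cong R/\mathfrak p^n$ and $\hat R^\mathfrak p/\mathfrak p \hat R^\mathfrak p \cong R/\mathfrak p$. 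Thus $E_n$ and $E'_n$ are injective hulls of $R/\mathfrak p$ over the same ring $R/\mathfrak p^n$.

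Finally I would pass to the direct limit. By the first step $E_R(R/\mathfrak p) = \varinjlim_n E_n$, and by the analogous $\mathfrak p^n\hat R^\mathfrak p$-torsion argument applied to $\hat R^\mathfrak p$, also $E_{\hat R^\mathfrak p}(\hat R^\mathfrak p/\mathfrak p\hat R^\mathfrak p) = \varinjlim_n E'_n$. The transition maps $E_n \hookrightarrow E_{n+1}$ and $E'_n \hookrightarrow E'_{n+1}$ are both the canonical inclusions of injective hulls $E_{R/\mathfrak p^n}(R/\mathfrak p) \hookrightarrow E_{R/\mathfrak p^{n+1}}(R/\mathfrak p)$ induced by the quotient $R/\mathfrak p^{n+1} \twoheadrightarrow R/\mathfrak p^n$, so the two directed systems are isomorphic and deliver the asserted isomorphism; $\hat R^\mathfrak p$-linearity is automatic because on each $E_n = E'_n$ the $\hat R^\mathfrak p$-action factors through the common ring $R/\mathfrak p^n$. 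The main obstacle is exactly this last compatibility: injective hulls are determined only up to non-canonical isomorphism, so one must fix a single embedding $R/\mathfrak p \hookrightarrow E_R(R/\mathfrak p)$ and use the uniqueness of the hull over each $R/\mathfrak p^n$ extending that embedding to choose the isomorphisms $E_n \cong E'_n$ coherently across the directed system, at which point the identification lifts to the whole direct limit.
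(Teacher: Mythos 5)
Your construction of the $\hat R^{\mathfrak p}$-module structure (every element of $E_R(R/\mathfrak p)$ is killed by a power of $\mathfrak p$, since its associated prime is only $\mathfrak p$; then let $\hat r\in\hat R^{\mathfrak p}$ act through an element $r\in R$ with $\hat r-r\in\mathfrak p^n$) is exactly the paper's first step. For the isomorphism itself you take a genuinely different route: the paper never filters the hull, but argues by essentiality in both directions --- every $\hat R^{\mathfrak p}$-submodule of $E_R(R/\mathfrak p)$ is an $R$-submodule, so $E_R(R/\mathfrak p)$ is an essential extension of $\hat R^{\mathfrak p}/\mathfrak p\hat R^{\mathfrak p}$ and embeds in $E_{\hat R^{\mathfrak p}}(\hat R^{\mathfrak p}/\mathfrak p\hat R^{\mathfrak p})$, and conversely the latter is shown (again via $\mathfrak p$-power torsion and approximation of $\hat r$ by $r\in R$) to be an essential extension of $R/\mathfrak p$ over $R$, which forces equality. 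Your path through $E_n=(0:_{E}\mathfrak p^n)\cong E_{R/\mathfrak p^n}(R/\mathfrak p)$ and $E=\varinjlim E_n$ is viable and has the virtue of making the classical Matlis filtration explicit, but the step you yourself flag needs more than ``uniqueness of the injective hull'': uniqueness at each level produces isomorphisms $\phi_n\colon E_n\to E'_n$ fixing $R/\mathfrak p$, yet does not by itself make them compatible with the inclusions. You must build them inductively: given $\phi_n$, extend it to an $R/\mathfrak p^{n+1}$-linear map $E_{n+1}\to E'_{n+1}$ using injectivity of $E'_{n+1}$ over $R/\mathfrak p^{n+1}$; the extension is injective because its kernel meets the essential submodule $R/\mathfrak p$ trivially, and surjective because its image is an injective, hence direct, summand of the essential extension $E'_{n+1}$ of $R/\mathfrak p$. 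With that induction supplied your argument closes, and your observation that $\hat R^{\mathfrak p}$-linearity is automatic (both actions factor through $R/\mathfrak p^n$ on the $n$-th layer) is correct. In short, the paper's two-sided essentiality argument buys brevity and avoids all non-canonical choices; your filtration argument is longer but yields the useful extra description $E_R(R/\mathfrak p)\cong\varinjlim_n E_{R/\mathfrak p^n}(R/\mathfrak p)$.
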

\begin{proof}
If $x\in E_R(R/\mathfrak p)$ then by \cite{m} $\Ass_R(Rx)= \{\mathfrak p\}$. It follows that some power of $\mathfrak p$ annihilates $x$.

Now let $\hat{r}\in \hat{R}^\mathfrak p$ then $\hat{r}= (r_1+\mathfrak p,\ldots,r_n+\mathfrak p^n,\ldots)$ such that $r_{n+1}- r_n\in \mathfrak p^n$ where $r_i+\mathfrak p\in R/\mathfrak p^i$ for all $i\geq 1.$ Since $x\in E_R(R/\mathfrak p)$ then there exists a fixed $n\in \mathbb N$ such that $\mathfrak p^nx= 0$ then choose $r\in R$ such that $\hat{r}- r\in \mathfrak p^n$ (by definition of the completion). Define $\hat{r}x= rx$ then it is clear that this gives a well-defined $\hat{R}^\mathfrak p$-module structure to $E_R(R/\mathfrak p)$ which agrees with its $R$-module structure. Since $E_R(R/\mathfrak p)$ is an essential extension of $R/\mathfrak p$ as an $R$-module then it necessarily is also an essential extension of $\hat{R}^\mathfrak p/\mathfrak p\hat{R}^\mathfrak p$ as an $\hat{R}^\mathfrak p$-module. So $E_R(R/\mathfrak p)\subseteq E_{{\hat{R}}^\mathfrak p}(\hat{R}^\mathfrak p/\mathfrak p\hat{R}^\mathfrak p)$.

To show that $E_R(R/\mathfrak p)$ is an injective hull of $\hat{R}^\mathfrak p/\mathfrak p\hat{R}^\mathfrak p$ as an $\hat{R}^\mathfrak p$-module it is enough to prove that $E_R(R/\mathfrak p)= E_{{\hat{R}}^\mathfrak p}(\hat{R}^\mathfrak p/\mathfrak p\hat{R}^\mathfrak p)$. To do this it suffices to see that $E_{{\hat{R}}^\mathfrak p}(\hat{R}^\mathfrak p/\mathfrak p\hat{R}^\mathfrak p)$ is an essential extension of $R/\mathfrak p$ as an $R$-module since $E_R(R/\mathfrak p)$ is an extension of $R/\mathfrak p$ as an $R$-module (see \cite{m}).

Let $x\in E_{{\hat{R}}^\mathfrak p}(\hat{R}^\mathfrak p/\mathfrak p\hat{R}^\mathfrak p)$ and choose an element $\hat{r}\in \hat{R}^\mathfrak p$ such that $0\neq \hat{r}x\in \hat{R}^\mathfrak p/\mathfrak p\hat{R}^\mathfrak p$. By the argument used in the beginning of the proof applied to $\hat{R}^\mathfrak p$, $E_{{\hat{R}}^\mathfrak p}(\hat{R}^\mathfrak p/\mathfrak p\hat{R}^\mathfrak p)$ and $x$ there is an $n\in \mathbb N$ such that $\mathfrak p^nx= 0$. Choose $r\in R$ such that $\hat{r}- r\in \mathfrak p^n$ then $\hat{r}x= rx\in R/\mathfrak p\cong \hat{R}^\mathfrak p/\mathfrak p\hat{R}^\mathfrak p$ (see \cite{a}) and $rx\neq 0$. Hence $E_{{\hat{R}}^\mathfrak p}(\hat{R}^\mathfrak p/\mathfrak p\hat{R}^\mathfrak p)$ is an essential extension of $R/\mathfrak p$ as an $R$-module and $E_R(R/\mathfrak p)= E_{{\hat{R}}^\mathfrak p}(\hat{R}^\mathfrak p/\mathfrak p\hat{R}^\mathfrak p)$ which completes the proof.
\end{proof}

\begin{Remark}
$(1)$
If $\mathfrak p\in \Spec(R)$ then by the last Theorem \ref{2} for any finitely generated $R$-module $M$ we have
\[
M^\vee\cong \Hom_{{R}}({M}, \Hom_{\hat{R}^\mathfrak p}(\hat{R}^\mathfrak p,E_{{\hat{R}}^\mathfrak p}(\hat{R}^\mathfrak p/\mathfrak p\hat{R}^\mathfrak p))).
\]
By Lemma \ref{4.5} the later module is isomorphic to $\Hom_{\hat{R}^\mathfrak p}(\hat{M}^\mathfrak p, E_{{\hat{R}}^\mathfrak p}(\hat{R}^\mathfrak p/\mathfrak p\hat{R}^\mathfrak p))$. Here we use that $\hat{M}^\mathfrak p\cong M\otimes_R \hat{R}^\mathfrak p$. By Proposition \ref{1.1} it implies that $M^\vee$ has an $\hat{R}^\mathfrak p$-module structure.

$(2)$
Since $\Supp_R(E_R(R/\mathfrak p))= V(\mathfrak p)$ where $\mathfrak p\in \Spec(R)$. So from \cite[Remark A.30]{e} the natural homomorphism
\[
E_R(R/\mathfrak p)\to E_R(R/\mathfrak p)\otimes_R \hat{R}^\mathfrak p .
\]
is an isomorphism. But $E_R(R/\mathfrak p)$ is isomorphic to the module $E_{{\hat{R}}^\mathfrak p}(\hat{R}^\mathfrak p/\mathfrak p\hat{R}^\mathfrak p)$ and it admits the structure of an ${\hat{R}}^\mathfrak p$-module (see Theorem \ref{2}). Moreover the above natural homomorphism is compatible with this structure. It implies that the $\mathfrak p$-adic completion of $R$ commutes with the injective hull of $R/\mathfrak p.$
\end{Remark}
It is well known that $\Hom_R(E_R(R/\mathfrak p), E_R(R/\mathfrak q))= 0$ for any $\mathfrak p\in V(\mathfrak q)$ and $\mathfrak p\neq \mathfrak q$. Moreover Enochs has proved that
\[
E_R(R/\mathfrak q)^\vee\cong\prod T_{\mathfrak p'}.
\]
where $\mathfrak p'\in \Spec(R)$ and $T_{\mathfrak p'}$ denotes the completion of a free $R_{\mathfrak p'}$-module with respect to $\mathfrak p'R_{\mathfrak p'}$-adic completion. Here we will show that

\begin{Theorem}\label{46}
Let $(R,\mathfrak m)$ be a complete Cohen-Macaulay ring and $\mathfrak q\in \Spec (R)$ be a one dimensional prime ideal. Then there is an isomorphism
\[
D(E_R(R/\mathfrak q))\cong  \hat{R}_\mathfrak{q}.
\]
\end{Theorem}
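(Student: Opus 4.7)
The plan is to combine Corollary \ref{45}(b) with a level-wise computation of $D(E_R(R/\mathfrak q))$ via the $\mathfrak q$-torsion filtration of $E_R(R/\mathfrak q)$. First I would apply Corollary \ref{45}(b) with $I = \mathfrak q$. Since $\mathfrak q$ is prime we have $\Ass_R(R/\mathfrak q) = \{\mathfrak q\}$, and together with $\dim(R/\mathfrak q) = 1$ this forces $\bigoplus_{i=1}^{s} \hat R_{\mathfrak p_i}$ to collapse to the single factor $\hat R_\mathfrak q$, giving the short exact sequence
\[
0 \to R/u(\mathfrak q) \to \hat R_\mathfrak q \to D(H^{n-1}_\mathfrak q(K_R)) \to 0.
\]

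Next, as in the proof of Theorem \ref{2}, every element of $E_R(R/\mathfrak q)$ is annihilated by some power of $\mathfrak q$, so $E_R(R/\mathfrak q) = \varinjlim_{s} (0 :_{E_R(R/\mathfrak q)} \mathfrak q^s)$ and the standard essential-extension argument identifies $0 :_{E_R(R/\mathfrak q)} \mathfrak q^s$ with $E_{R/\mathfrak q^s}(R/\mathfrak q)$. Applying Lemma \ref{4.5}(3) to pass $D$ through the direct limit, and then Proposition \ref{4}(a) with $S = R/\mathfrak q^s$ and $\mathfrak p = \mathfrak m$ to rewrite each factor, gives
\[
D(E_R(R/\mathfrak q)) \cong \varprojlim_{s} \Hom_{R/\mathfrak q^s}\bigl( E_{R/\mathfrak q^s}(R/\mathfrak q),\ E_{R/\mathfrak q^s}(k) \bigr).
\]

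The crux is then to identify each factor with the Artinian local ring $A_s := R_\mathfrak q/\mathfrak q^s R_\mathfrak q = (R/\mathfrak q^s)_{\mathfrak q/\mathfrak q^s}$, whose residue field is $k(\mathfrak q)$. Upon localizing at $\mathfrak q/\mathfrak q^s$ the module $E_{R/\mathfrak q^s}(R/\mathfrak q)$ becomes $E_{A_s}(k(\mathfrak q))$, and Matlis duality on the complete Artinian ring $A_s$ yields $\Hom_{A_s}(E_{A_s}(k(\mathfrak q)), E_{A_s}(k(\mathfrak q))) \cong A_s$. Reconciling this $A_s$-duality with the $R/\mathfrak q^s$-duality into the different injective hull $E_{R/\mathfrak q^s}(k)$ is the principal obstacle; it is essentially the one-dimensional case of the theorem for the (generally non-Cohen-Macaulay) complete local ring $R/\mathfrak q^s$, and I expect to handle it by an independent argument exploiting the completeness of $R/\mathfrak q$. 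Granted this identification, the transition maps between levels are the canonical surjections $A_{s+1} \twoheadrightarrow A_s$, so the inverse limit is $\hat R_\mathfrak q$ by definition of the $\mathfrak q R_\mathfrak q$-adic completion, giving $D(E_R(R/\mathfrak q)) \cong \hat R_\mathfrak q$ in accordance with the middle term of the sequence from Corollary \ref{45}(b).
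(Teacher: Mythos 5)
Your plan has a genuine gap at exactly the point you yourself flag as ``the principal obstacle,'' and that obstacle is the entire content of the theorem. The formal steps are fine: $0:_{E_R(R/\mathfrak q)}\mathfrak q^s\cong E_{R/\mathfrak q^s}(R/\mathfrak q)$, and passing $D$ through the direct limit via Lemma \ref{4.5}(3) and Proposition \ref{4}(a) does give
\[
D(E_R(R/\mathfrak q))\cong \varprojlim_s \Hom_{R/\mathfrak q^s}\bigl(E_{R/\mathfrak q^s}(R/\mathfrak q),\,E_{R/\mathfrak q^s}(k)\bigr).
\]
But the identification of each term with $A_s=R_\mathfrak q/\mathfrak q^sR_\mathfrak q$ is not established. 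Matlis duality over the Artinian ring $A_s$ computes $\Hom_{A_s}(E_{A_s}(k(\mathfrak q)),E_{A_s}(k(\mathfrak q)))\cong A_s$, whereas the module you need is the dual of $E_{R/\mathfrak q^s}(R/\mathfrak q)$ into $E_{R/\mathfrak q^s}(k)$, the injective hull of the residue field at $\mathfrak m$; these are different functors and no localization trick converts one into the other. Indeed, by Schenzel's theorem \cite[Theorem 1.1]{p1} that term equals $\widehat{(R/\mathfrak q^s)}_{\mathfrak q}=A_s$ precisely because $(R/\mathfrak q^s)/(\mathfrak q/\mathfrak q^s)=R/\mathfrak q$ is complete: your per-level claim \emph{is} the one-dimensional statement being proved, applied to the rings $R/\mathfrak q^s$, which for $s\geq 2$ are in general neither reduced nor Cohen--Macaulay, so they are not covered by the theorem's own hypotheses and you cannot self-apply or induct. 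Deferring this to an unspecified ``independent argument exploiting the completeness of $R/\mathfrak q$'' leaves the argument circular in substance (unless you simply invoke \cite{p1}, which the paper cites as related work but deliberately does not use). You would also need compatibility of your identifications with the transition maps, but that is minor by comparison.

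For contrast, the paper never computes the levels exactly; it only uses the inverse-limit description $D(E_R(R/\mathfrak q))\cong\varprojlim_s D(E_R(R/\mathfrak q))/\mathfrak q^sD(E_R(R/\mathfrak q))$ to build a natural map $\hat R_\mathfrak q\to D(E_R(R/\mathfrak q))$. The actual computation is done through the Cohen--Macaulay hypothesis: since $K_R$ is maximal Cohen--Macaulay with full support, applying $\Gamma_\mathfrak q$ to the minimal injective resolution of $K_R$ (Fossum--Foxby--Griffith--Reiten) gives the exact sequence $0\to H^{n-1}_\mathfrak q(K_R)\to E_R(R/\mathfrak q)\to E_R(k)\to H^n_\mathfrak q(K_R)\to 0$; Matlis duality, $D(E_R(k))\cong R$ and $D(H^n_\mathfrak q(K_R))\cong u(\mathfrak q)$ turn this into $0\to R/u(\mathfrak q)\to D(E_R(R/\mathfrak q))\to D(H^{n-1}_\mathfrak q(K_R))\to 0$, which is compared by the five lemma with the sequence from Corollary \ref{45}(b) that you derived in your first step. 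So in the paper that corollary is one of the two rows of the five-lemma diagram, not a consistency check; what your plan is missing is a substitute for this $\Gamma_\mathfrak q$-computation on the injective resolution of $K_R$ (or an outright appeal to \cite{p1}).
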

\begin{proof}
Since $R$ is complete Cohen-Macaulay so $K_R$ exists and we have
\[
H^{i}_\mathfrak q(K_R)= 0\text { for all $i< \height (\mathfrak{q})= n-1$.}
\]
This is true because of $K_R$ is a maximal Cohen-Macaulay $R$-module of finite injective dimension and $\Supp_R(K_R)= \Spec (R)$. Let $E^{\cdot}_R(K_R)$ be a minimal injective resolution of $K_R$. Then by \cite[Theorem 1.1]{f} we have
\[
E^{\cdot}_R(R)^i\cong \bigoplus_{\height (\mathfrak p)= i} E_R(R/\mathfrak p).
\]
Moreover $\Gamma_{\mathfrak{q}}(E_R(R/{\mathfrak p}))=0$ for all $\mathfrak p\notin V(\mathfrak{q})$ and $\Gamma_{\mathfrak{q}}(E_R(R/{\mathfrak p}))= E_R(R/{\mathfrak p})$ for all $\mathfrak p\in V(\mathfrak{q})$. Then apply $\Gamma_\mathfrak{q}$ to $E^{\cdot}_R(K_R)$ it induces the following exact sequence
\[
0\to H^{n-1}_\mathfrak{q}(K_R)\to E_R(R/\mathfrak q)\to E_R(k)\to H^{n}_\mathfrak{q}(K_R)\to 0.
\]
Applying Matlis dual to this sequence yields the following exact sequence
\begin{equation}\label{ra1}
0\to D(H^{n}_\mathfrak{q}(K_R))\to R\to D(E_R(R/\mathfrak q))\to D(H^{n-1}_\mathfrak{q}(K_R))\to 0.
\end{equation}
Here we use that $D(E_R(k))\cong R$ (since $R$ is complete). By the proof Corollary \ref{45} $(b)$ and \cite[Lemma 4.1]{p5} there are isomorphisms
\[
D(H^{n}_\mathfrak{q}(K_R))\cong \lim_{\longleftarrow} H^0_\mathfrak{m} (R/\mathfrak q^s)\cong u(\mathfrak q).
\]
So the exact sequence \ref{ra1} provides the following exact sequence
\begin{equation}\label{ra11}
0\to R/u(\mathfrak q)\to D(E_R(R/\mathfrak q))\to D(H^{n-1}_\mathfrak{q}(K_R))\to 0.
\end{equation}
Now the module $D(E_R(R/\mathfrak q))$ is an $R_\mathfrak{q}$-module so there is a natural homomorphism $R_\mathfrak{q}\to D(E_R(R/\mathfrak q))$. Then tensoring with $R_\mathfrak{q}/\mathfrak{q}^sR_\mathfrak{q}$ to this homomorphism induces the following homomorphism
\[
R_\mathfrak{q}/\mathfrak{q}^sR_\mathfrak{q}\to D(E_R(R/\mathfrak q))/\mathfrak{q}^sD(E_R(R/\mathfrak q))
\]
for each $s\in \mathbb{N}$. Now take the inverse limit of it we get that
\[
\hat{R}_\mathfrak{q}\to \lim_{\longleftarrow}D(E_R(R/\mathfrak q))/\mathfrak{q}^sD(E_R(R/\mathfrak q)).
\]
On the other side since $\Supp_R (E_R(R/\mathfrak q))= V(\mathfrak{q})$ so the module $E_R(R/\mathfrak q)$ is isomorphic to $ \lim\limits_{\longrightarrow}\Hom_R(R/\mathfrak{q}^s, E_R(R/\mathfrak q))$. Then by Proposition \ref{4.5} $(3)$ there is an isomorphism
\[
D(E_R(R/\mathfrak q))\cong  \lim_{\longleftarrow}D(\Hom_R(R/\mathfrak{q}^s, E_R(R/\mathfrak q))).
\]
But again Proposition \ref{4.5} $(2)$ implies that the module $D(\Hom_R(R/\mathfrak{q}^s, E_R(R/\mathfrak q)))$ is isomorphic to $D(E_R(R/\mathfrak q))/\mathfrak{q}^sD(E_R(R/\mathfrak q))$. Therefore there is a natural homomorphism
\[
\hat{R_\mathfrak{q}}\to D(E_R(R/\mathfrak q)).
\]
Since $R$ is complete Cohen-Macaulay so by Corollary \ref{45} $(c)$ there is an exact sequence
\[
0\to R/u(\mathfrak{q})\to \hat{R}_\mathfrak{q}\to D(H^{n-1}_\mathfrak{q}(K_R))\to 0.
\]
Then this sequence together with the sequence \ref{ra11} induces the following commutative diagram with exact rows
\[
\begin{array}{cccccccc}
 0 & \to & R/u(\mathfrak{q}) & \to & \hat{R}_\mathfrak{q} & \to & D(H^{n-1}_\mathfrak{q}(K_R)) & \to 0 \\
    &   & ||&  & \downarrow &   & ||   &\\
 0 & \to & R/u(\mathfrak{q}) & \to & D(E_R(R/\mathfrak q)) & \to & D(H^{n-1}_\mathfrak{q}(K_R)) &\to 0
\end{array}
\]
Then by Five Lemma there is an isomorphism $D(E_R(R/\mathfrak q))\cong \hat{R}_\mathfrak{q}$ which is the required isomorphism.
\end{proof}
\begin{Corollary}\label{47bb}
Fix the notation of Theorem \ref{46} then we have:
\begin{itemize}
 \item [(a)] There is an exact sequence
\begin{gather*}
0\to R/u(\mathfrak q)\to D(E_{R}(R/\mathfrak q))\to
D(H^{n-1}_{\mathfrak{q}}(K(R)))\to 0.
\end{gather*}
 \item [(b)] Suppose in addition that $R$ is domain. Then there is an isomorphism
\[
D(H^{n-1}_{\mathfrak{q}}(K(R)))\cong \hat{R}_\mathfrak{q}/R.
\]
\end{itemize}
\end{Corollary}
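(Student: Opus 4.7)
The plan is to extract part (a) directly from the proof of Theorem~\ref{46}: the short exact sequence claimed in (a) is precisely the displayed sequence (\ref{ra11}) derived there. Recall that in that proof one applies $\Gamma_\mathfrak{q}$ to a minimal injective resolution of $K_R$ (using that $K_R$ is a maximal Cohen-Macaulay $R$-module of finite injective dimension with full support, so $H^i_\mathfrak{q}(K_R)=0$ for $i<n-1$) to obtain the four-term sequence
\[
0\to H^{n-1}_\mathfrak{q}(K_R)\to E_R(R/\mathfrak q)\to E_R(k)\to H^{n}_\mathfrak{q}(K_R)\to 0,
\]
then dualizes and inserts the identifications $D(E_R(k))\cong R$ (valid because $R$ is complete) and $D(H^n_\mathfrak{q}(K_R))\cong u(\mathfrak{q})$ coming from Corollary~\ref{45}(b) and \cite[Lemma~4.1]{p5}. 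So for (a) no new argument is needed.

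For part (b), the only additional input is the computation of $u(\mathfrak{q})$ when $R$ is a domain. In that case the zero ideal is prime, so its minimal primary decomposition consists of the single component $(0)$ with associated prime $\mathfrak{p}_1=(0)$. The defining condition of $u(\mathfrak{q})$ then reads $\dim R/(\mathfrak{p}_1+\mathfrak{q})=\dim R/\mathfrak{q}=1>0$, which holds, so the intersection defining $u(\mathfrak{q})$ collapses to $(0)$. Hence $u(\mathfrak{q})=(0)$ and $R/u(\mathfrak{q})\cong R$.

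Substituting this into the sequence of part~(a) and invoking the isomorphism $D(E_R(R/\mathfrak{q}))\cong \hat{R}_\mathfrak{q}$ supplied by Theorem~\ref{46} rewrites that sequence as
\[
0\to R\to \hat{R}_\mathfrak{q}\to D(H^{n-1}_\mathfrak{q}(K_R))\to 0,
\]
which at once yields $D(H^{n-1}_\mathfrak{q}(K_R))\cong \hat{R}_\mathfrak{q}/R$. I do not anticipate any substantive obstacle; the only mildly delicate step is the identification of $u(\mathfrak{q})$, and in the domain case this reduces to a one-line check from the definition.
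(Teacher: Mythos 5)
Your proposal is correct and is essentially the paper's own argument: the paper obtains (a) by combining the exact sequence of Corollary \ref{45}(b) applied with $I=\mathfrak q$ with the isomorphism $D(E_R(R/\mathfrak q))\cong \hat R_{\mathfrak q}$ from Theorem \ref{46}, which amounts to the same sequence (\ref{ra11}) you quote directly from that proof. Part (b) is handled identically in the paper: $u(\mathfrak q)=0$ because in a domain the zero ideal is prime and $\dim R/\mathfrak q=1>0$, so the sequence becomes $0\to R\to \hat R_{\mathfrak q}\to D(H^{n-1}_{\mathfrak q}(K_R))\to 0$, giving the stated isomorphism.
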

\begin{proof}
Since $R$ is complete so apply Corollary \ref{45} $(b)$  for $I= \mathfrak{q}$ then it implies that there is an exact sequence
\[
0\to R/u(\mathfrak q)\to \hat{R}_\mathfrak{q}\to
D(H^{n-1}_{\mathfrak{q}}(K(R)))\to 0.
\]
Then the statement $(a)$ is easily follows from Theorem \ref{46}. Note that the statement $(b)$ follows from the above short exact sequence. Recall that $u(\mathfrak{q})= 0$ since $R$ is a domain.
\end{proof}
In the next context we need the following definition of flat covers.
\begin{Definition}
Let $M$ be an $R$-module and $F$ be any flat $R$-module then the linear map $\phi: F\to M$ is called a flat cover of $M$ if the following conditions hold:
\begin{itemize}
\item[(i)] For any flat $R$-module $G$ the following sequence is exact
\[
\Hom_R(G,F)\to \Hom_R(G, M)\to 0
\]
\item[(ii)] If $\phi= \phi \circ f$ for some $f\in \Hom_R(F,F)$ then $f$ is an automorphism of $F.$
\end{itemize}
\end{Definition}
Note that if only condition $(i)$ holds then $F$ is called a flat precover. Enochs proved in his paper (see \cite[Theorem 3.1]{e2}) that if $M$ has a flat precover then it also admits a flat cover and it is unique up to isomorphisms.

The following Lemma is an easy consequence of chasing diagram ( see \cite[Lemma 1.1]{e1}).
\begin{Lemma}\label{11}
Let $M$ be an $R$-module and $F, F'$ are any flat $R$-modules then we have
\begin{itemize}
\item[(a)] If $\phi': F'\to M$ is a flat precover of $M$ and $\phi: F\to M$ is a flat cover of $M$ such that $\phi'= \phi \circ f$ for some $f\in \Hom_R(F',F)$ then $f$ is surjective and $\ker(f)$ is a direct summand of $F'.$

\item[(b)] If $\phi: F\to M$ is a flat precover of $M$ then it is a cover if and only if $\ker(\phi)$ contains no non-zero direct summand of $F$.
\end{itemize}
\end{Lemma}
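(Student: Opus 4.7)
My plan is to handle the two parts separately, using the cover/precover definitions directly together with Enochs' existence theorem (already cited in the excerpt).

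For part (a), the idea is to symmetrize the relation $\phi' = \phi \circ f$ by invoking the precover property of $\phi'$. Applied to the flat module $F$, the defining surjection $\Hom_R(F,F') \to \Hom_R(F,M) \to 0$ produces a map $g \colon F \to F'$ with $\phi' \circ g = \phi$. Substituting yields $\phi = \phi \circ (f \circ g)$, and since $\phi$ is a \emph{cover}, condition (ii) of the definition forces $f \circ g$ to be an automorphism of $F$. Surjectivity of $f$ is then immediate. To split $\ker(f)$ off, set $h = (f \circ g)^{-1}$; then $f \circ (g \circ h) = \id_F$, so $g \circ h$ is a section of $f$ and $F' = \ker(f) \oplus \im(g \circ h)$.

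For part (b), the forward direction is a direct contradiction argument: if $\ker(\phi) = F_0 \oplus F_1'$ with $F_0$ a nonzero direct summand of $F = F_0 \oplus F_1$ (so $F_1' \subseteq F_1$... actually more cleanly, pick any nonzero direct summand $F_0 \subseteq \ker(\phi)$, write $F = F_0 \oplus F_1$, and let $f \colon F \to F$ be the projection onto $F_1$). Then $\phi \circ f = \phi$ because $\phi$ annihilates $F_0$, yet $f$ kills $F_0 \neq 0$ and so is not an automorphism — violating the cover condition.

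The reverse direction of (b) is where I would use Enochs' existence theorem (cited just before the lemma): since $\phi$ is a flat precover, $M$ admits a flat cover $\psi \colon G \to M$. Applying part (a) with $\phi' := \phi$ and $\phi := \psi$ gives a map $\beta \colon F \to G$ with $\psi \circ \beta = \phi$ such that $\beta$ is surjective and $\ker(\beta)$ is a direct summand of $F$. But $\ker(\beta) \subseteq \ker(\phi)$, so the hypothesis forces $\ker(\beta) = 0$; hence $\beta$ is an isomorphism and $\phi$ inherits the cover property from $\psi$.

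The only real obstacle is bookkeeping of which map is a cover vs.\ a precover when invoking (a) inside (b); provided one is careful to apply the cover axiom only to the map genuinely known to be a cover (namely $\phi$ in (a), and $\psi$ in (b) $\Leftarrow$), the rest is diagram chasing.
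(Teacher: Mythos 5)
Your proof is correct, and it is essentially the argument the paper points to: the paper offers no proof of its own, citing only Enochs' Lemma 1.1 and calling it an easy diagram chase, and your write-up is exactly that standard chase --- obtaining $g$ from the precover property of $\phi'$, using the cover axiom to make $f\circ g$ an automorphism and hence split $\ker(f)$ off, and in (b) $\Leftarrow$ legitimately invoking Enochs' existence theorem (cited in the paper just before the lemma) to compare $\phi$ with a flat cover $\psi$. The only cosmetic point is in (b) $\Leftarrow$: part (a) does not itself produce $\beta$; its existence comes from $\psi$ being a (pre)cover applied to the flat module $F$, which is how you built $g$ in (a) anyway.
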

Enochs (see \cite[p. 183]{e1}) has shown that the module $\Hom_R(E_R(R/\mathfrak q), E_R(R/\mathfrak p))$ is a flat cover of some cotorsion module. Here we will show the following result:
\begin{Theorem}\label{12}
Let $(R,\mathfrak m)$ be a ring and $\mathfrak p,\mathfrak q \in \Spec(R)$ such that $\mathfrak p\in V(\mathfrak q)$. Then $E_R(R/\mathfrak q)^\vee$ is a flat precover of $(R/\mathfrak q)^\vee$.
\end{Theorem}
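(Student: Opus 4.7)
The plan is to exhibit an explicit flat precover. Start with the essential inclusion $R/\mathfrak q \hookrightarrow E_R(R/\mathfrak q)$ and apply the functor $(\cdot)^\vee = \Hom_R(\cdot, E_R(R/\mathfrak p))$. Since $E_R(R/\mathfrak p)$ is injective, $(\cdot)^\vee$ is exact, so we obtain a surjection
\[
\phi \colon E_R(R/\mathfrak q)^\vee \longrightarrow (R/\mathfrak q)^\vee \to 0.
\]
This will be my candidate. Flatness of the source $E_R(R/\mathfrak q)^\vee$ is already known from the Enochs decomposition $E_R(R/\mathfrak q)^\vee \cong \prod T_{\mathfrak p'}$ recalled in the introduction: each $T_{\mathfrak p'}$ is the $\mathfrak p'R_{\mathfrak p'}$-adic completion of a free $R_{\mathfrak p'}$-module, hence flat, and arbitrary products of flat modules over the Noetherian ring $R$ remain flat.

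It then remains to verify condition (i) of the definition of a flat precover: for every flat $R$-module $G$, the induced map
\[
\Hom_R(G, E_R(R/\mathfrak q)^\vee) \longrightarrow \Hom_R(G, (R/\mathfrak q)^\vee)
\]
is surjective. I would do this by Hom-tensor adjunction. Both Hom-sets identify naturally with $(G \otimes_R E_R(R/\mathfrak q))^\vee$ and $(G \otimes_R R/\mathfrak q)^\vee$ respectively, and the induced map between them is obtained by applying $(\cdot)^\vee$ to the tensored arrow $G \otimes_R R/\mathfrak q \to G \otimes_R E_R(R/\mathfrak q)$. Flatness of $G$ keeps this tensored arrow injective, and exactness of $(\cdot)^\vee$ then yields the required surjection.

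In short, the whole argument reduces to chaining two standard functorial identifications with the exactness of $(\cdot)^\vee$ and the flatness of $G$; there is no serious technical obstacle, only the bookkeeping needed to confirm that the two adjunction isomorphisms are compatible with the specific map $\phi$. One remark on the hypothesis: the inclusion $\mathfrak p \in V(\mathfrak q)$ is not strictly used in the argument above, but it is precisely the range in which the statement is non-trivial, since for $\mathfrak p \notin V(\mathfrak q)$ one easily checks that both $(R/\mathfrak q)^\vee$ and $E_R(R/\mathfrak q)^\vee$ vanish (any $x \in \mathfrak q \setminus \mathfrak p$ acts as a unit on $E_R(R/\mathfrak p)$ but locally nilpotently on $R/\mathfrak q$ and $E_R(R/\mathfrak q)$).
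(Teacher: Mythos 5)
Your argument is correct and is essentially the paper's own proof: both verify the precover condition by tensoring a flat module $G$ with the essential inclusion $R/\mathfrak q\hookrightarrow E_R(R/\mathfrak q)$ (injective by flatness of $G$), applying $\Hom_R(\cdot,E_R(R/\mathfrak p))$ using the injectivity of $E_R(R/\mathfrak p)$, and translating via Hom-tensor adjunction into the surjectivity of $\Hom_R(G,E_R(R/\mathfrak q)^\vee)\to \Hom_R(G,(R/\mathfrak q)^\vee)$. The only addition on your side is the explicit check that $E_R(R/\mathfrak q)^\vee$ is flat (via Enochs' decomposition $\prod T_{\mathfrak p'}$ and the fact that products of flat modules over a Noetherian ring are flat), a point the paper leaves implicit in its citation of Enochs.
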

\begin{proof}
Note that $E_R(R/\mathfrak q)$ is as essential extension of $R/\mathfrak q$. Let $F$ be any flat $R$-module then the inclusion map $R/\mathfrak q\otimes_R F\hookrightarrow E_R(R/\mathfrak q)\otimes_R F$ induces the following exact sequence
\[
\Hom_R(E_R(R/\mathfrak q)\otimes_R F, E_R(R/\mathfrak p))\to \Hom_R(R/\mathfrak q\otimes_R F, E_R(R/\mathfrak p))\to 0
\]
By the adjunction formula (see Lemma \ref{4.5}) we conclude that the following homomorphism is surjective for any flat $R$-module $F$
\[
\Hom_R(F, E_R(R/\mathfrak q)^\vee)\to \Hom_R(F, (R/\mathfrak q)^\vee)
\]
which proves that $E_R(R/\mathfrak q)^\vee$ is a flat precover of $(R/\mathfrak q)^\vee$.
\end{proof}
\begin{Remark}
Note that if $R$ is a complete local ring then $R$ is a flat cover of the residue field $k$ (see \cite[Example 5.3.19]{ee}).
\end{Remark}

\begin{Problem}
$(1)$ Let $R$ be a complete local ring and $\mathfrak p\in V(\mathfrak q)$ and $\mathfrak{p}\neq \mathfrak{q}.$ It would be of some interest to see whether $E_R(R/\mathfrak q)^\vee$ is a flat cover of $(R/\mathfrak q)^\vee$ or not$?$

$(2)$ Note that it was shown in \cite[Theorem 1.1]{p1} that if $\dim(R/\mathfrak p)=1$ then $R/\mathfrak p$ is complete if and only if $D(E_R(R/\mathfrak p))\cong  \hat{R}_\mathfrak{p}$. Let $R$ be a complete local Cohen-Macaulay ring. Is it possible to generalize Theorem \ref{46} for arbitrary prime ideals $\mathfrak q\subsetneq \mathfrak p$ $?$
\end{Problem}

\noindent\textbf{Acknowledgement.} The author is grateful to the reviewer for suggestions to
improve the manuscript.


\begin{thebibliography}{}
\bibitem [1] {a} {\sc M. F. Atiyah, I. G. Macdonald:} Introduction to Commutative Algebra. Uviversity of Oxford (1969).

\bibitem [2] {her} {\sc W. Bruns, J. Herzog:} Cohen-Macaulay Rings, Cambridge Univ. Press, 39 (1998).

\bibitem [3] {b} {\sc M. Brodmann, R. Sharp:} Local Cohomology. An Algebraic Introduction with Geometric Applications. Cambridge Studies in Advanced Mathematics No. 60. Cambridge University Press, (1998).

\bibitem [4] {e2} {\sc E. Enochs:} Injective and Flat Covers, Envelopes and Resolvents, Israel J. Math. 39 (1981) 189-209.

\bibitem [5] {e1} {\sc E. Enochs:} Flat Covers and Flat Cotorsion Modules. Proc. Amer. Math. Soc. 92 (1984) 179-184.

\bibitem [6] {ee} {\sc E. Enochs and J. Jenda:}  Relative Homological Algebra (de Gruyter Expositions in Mathematics, 30) Walter de Gruyter, Berlin 2000.

\bibitem [7] {f} {\sc R. Fossum, H.-B. Foxby, B. Griffith, I. Reiten:} Minimal Injective Resolutions with Applications to Dualizig Modules and Gorenstein Modules. Publ. Math. Inst. Hautes Etudues Sci. 45 (1976) 193-215.

\bibitem [8]  {goth} {\sc A. Grothendieck:} Local Cohomology, Notes by R. Hartshorne, Lecture Notes in Math. vol.41, Springer,1967.

\bibitem [9] {h} {\sc C. Huneke :} Lectures on Local Cohomology (with an Appendix by Amelia Taylor), Cont. Math. 436 (2007), 51-100.

\bibitem [10] {he} {\sc M. Hellus:} Local Cohomology and Matils Duality, arXiv:math/0703124v1 [math.AC](2007).

\bibitem [11] {m} {\sc E. Matlis:} Injective Modules Over Noetherian Rings. Pacific J. Math. 8 (1958) 511-528.

\bibitem [12] {mat} {\sc H. Matsumura:} Commutative Ring Theory, Cambridge Studies in Advanced Mathematics 8, Cambridge University Press, (1986).

\bibitem [13]  {e} {\sc E. Miller, S. Iyengar, G. J. Leuschke, A. Leykin, C. Miller:} Towenty Four Hours of Local Cohomology (Graduate Studies in Mathematics), American Mathematical Society, Vol. 87

\bibitem [14] {waq1} {\sc W. Mahmood:} On Cohomologically Complete Intersections in Cohen-Macaulay Rings, submitted.

\bibitem [15] {p4} {\sc P. Schenzel:}  On Birational Macaulayfications and Cohen-Macaulay Canonical Modules. J.
Algebra 275 (2004), 751-770.

\bibitem[16]{p5} {\sc P. Schenzel:} On Formal Local Cohomology and Connectedness. J. Algebra 315 (2007) no. 2, 894-923.

\bibitem[17]{p1} {\sc P. Schenzel:} A Note on the Matlis Dual of a Certain Injective Hull. arXiv:1306.3311v1 [math.AC](2013).

\bibitem [18] {w} {\sc C. Weibel:} An Introduction to Homological Algebra, Cambridge Univ. Press, 1994.

\end{thebibliography}
\end{document}